\documentclass[11pt,english,oneside]{amsart}
\usepackage[T1]{fontenc}
\usepackage[latin9]{inputenc}
\usepackage{geometry}
\geometry{verbose,letterpaper,tmargin=1in,bmargin=1in,lmargin=1in,rmargin=1in}
\setlength{\parskip}{\medskipamount}
\setlength{\parindent}{0pt}
\usepackage{amssymb}
\usepackage{color}

\usepackage{graphicx,color}
\usepackage{amsmath, amssymb, graphics}

\usepackage{graphicx}

\makeatletter
\numberwithin{equation}{section} 
\numberwithin{figure}{section} 
  \@ifundefined{theoremstyle}{\usepackage{amsthm}}{}
  \theoremstyle{plain}
  \newtheorem{thm}{Theorem}[section]
  \theoremstyle{plain}
  
  \theoremstyle{plain}
  \newtheorem{prop}[thm]{Proposition}
  \theoremstyle{Remark}
  \newtheorem{rem}[thm]{Remark}
  \theoremstyle{remark}
  
  \theoremstyle{plain}
  \newtheorem{lem}[thm]{Lemma}
  \newtheorem{mydef}{Definition}


\usepackage{geometry}



\def\bfR#1{{\bf R}^#1}

\def\com#1{ \hbox{#1}}

\def\e{\hbox{\rm e}}

\smallskip
\def\<{{\langle }}
\def\>{{\rangle }}


\makeatother

\usepackage{babel}


\def\bfR#1{{\bf R}^#1}

\def\com#1{ \quad\hbox{#1}\quad}

\def\e{\hbox{\rm e}}

\smallskip
\def\<{{\langle }}
\def\>{{\rangle }}

\makeatother

\begin{document}

\title[cmc tori in $S^3$]{Rotational surfaces in $S^3$ with constant mean curvature.}

\author{ Oscar M. Perdomo}

\date{\today}

\curraddr{O. Perdomo\\
Department of Mathematics\\
Central Connecticut State University\\
New Britain, CT 06050 USA\\
e-mail: perdomoosm@ccsu.edu
}


\subjclass[2000]{ 53C42, 53C10}

\maketitle

\begin{abstract}

Recently Ben Andrews and Haizhong Li \cite{AL} showed that every embedded cmc torus in the three dimensional sphere is axially symmetric. There is a two-parametric family of axially symmetric cmc surfaces; more precisely, for every real number $H$ and every $C\ge2(H+\sqrt{1+H^2})$ there is an axially symmetry surface $\Sigma_{H,C}$ with mean curvature $H$. In \cite{P}, Perdomo showed that for every $H$ between $\cot(\frac{\pi}{m})$ and
$\frac{m^2-2}{2\sqrt{m^2-1}}$ there exists an embedded axially symmetric example with non constant principal curvatures that is invariant under the ciclic group $Z_m$. In \cite{AL}, Andrews and Li, showed that these examples are the only  non-isoparametric embedded examples in the family when $H\ge0$. In this paper we study those examples in the family with $H<0$. We prove that there are no embedded examples in the family when $H<0$ and we also prove that for every integer $m>2$ there is a properly immersed example in this family that contains a great circle and is invariant under the ciclic group $Z_m$. We will say that these examples contain the axis of symmetry. Finally we show that every non-isoparametric surface $\Sigma_{H,C}$ is either properly immersed invariant under the ciclic group $Z_m$ for some integer $m>1$ or it is dense in the region bounded by two isoparametric tori if the surface $\Sigma_{H,C}$ does not contain the axis of symmetry or it is dense in the region bounded by a totally umbilical surface if the surface $\Sigma_{H,C}$ contains the axis of symmetry.
\end{abstract}

\section{Introduction}

We say that a surface $\Sigma$ in the three dimensional unit sphere $S^3$ is axially symmetric with respect to the geodesic $\gamma(s)=(\cos s,\sin s, 0,0)$, if  the surface has the form
\begin{eqnarray}\label{the immersions}
\phi(s,t)=(\sqrt{1-|\alpha(t)|^2}\, \cos(s),\sqrt{1-|\alpha(t)|^2}\, \sin(s),\alpha(t))
\end{eqnarray}

where $\alpha: { \bf R}\to \bfR{2}$  is a regular curve contained in the unit circle. The curve $\alpha(t)$ is called the profile curve of the surface $\phi$ and we will say that the surface 
contains the axis of symmetry if the curve $\alpha$ passes through the origin. When the profile curve is a straight line, the surface is a totally umbilical sphere. For  sake of simplicity explaining the result in this paper we will omit totally umbilical spheres from the family of axially symmetric surfaces. When $\alpha$ is a circle centered at the origin, the principal curvatures of the immersion $\phi$ are constant, that is, the surface is isoparametric.
This paper studies axially symmetric surfaces in $S^3$ with constant mean curvature, that is, surfaces with cmc such that up to a rigid motion are of the form (\ref{the immersions}).  From the results in either \cite{W} or \cite{P} we conclude that all the axially symmetric surfaces can be described with two parameter as the set $\{\Sigma_{H,C}: H\in {\bf R},\, C\ge2(H+\sqrt{1+H^2}) \}$. One of the main concerns of this paper is to decide when these surfaces are embedded. With respect to this question, Furuya in 1971 \cite{F}, and independently  Otsuki  in 1972 \cite{O1}  showed that the only axially symmetric minimal hypersurfaces in the $n$ dimensional unit sphere are the Clifford tori- the only isoparametric minimal examples with two principal curvatures. In 1986, Ripol \cite{R} showed that there is a non-isoparametric embedded axially symmetric surface with constant mean curvature $H$,  for any $H$ different from $0$ and $\pm \frac{1}{\sqrt{3}}$. In 1990, Leite and Brito showed the existence of infinitely many embedded non-isoparametric axially symmetric hypersurfaces in $S^{n+1}$ with constant mean curvature. In 2010, Perdomo \cite{P} showed that for every integer $m\ge2$ and any  $H$ between $\cot(\frac{\pi}{m})$ and $\frac{(m^2-2)\sqrt{n-1}}{n\sqrt{m^2-1}}$, there is an non-isoparametric axially symmetric embedded hypersurface in $S^{n+1}$ whose profile curve is invariant under the cyclic group $Z_m$. For axially symmetric surfaces, using complex variables, Li and Andrews, showed that the examples found by Perdomo in \cite{P} are the only embedded axially surfaces in the familly $\Sigma_{H,C}$ when $H\ge0$.  In this paper we will show that none of the examples with $H<0$ are embedded. We will also show that for every $m\ge 3$ there is an axially symmetric surface that contains the axis of symmetric whose profile curve is invariant under the cyclic group $Z_m$. Here are some pictures of  these profile curves.

\begin{figure}[h]\label{special examples}
\centerline{\includegraphics[width=2.5cm,height=2.5cm]{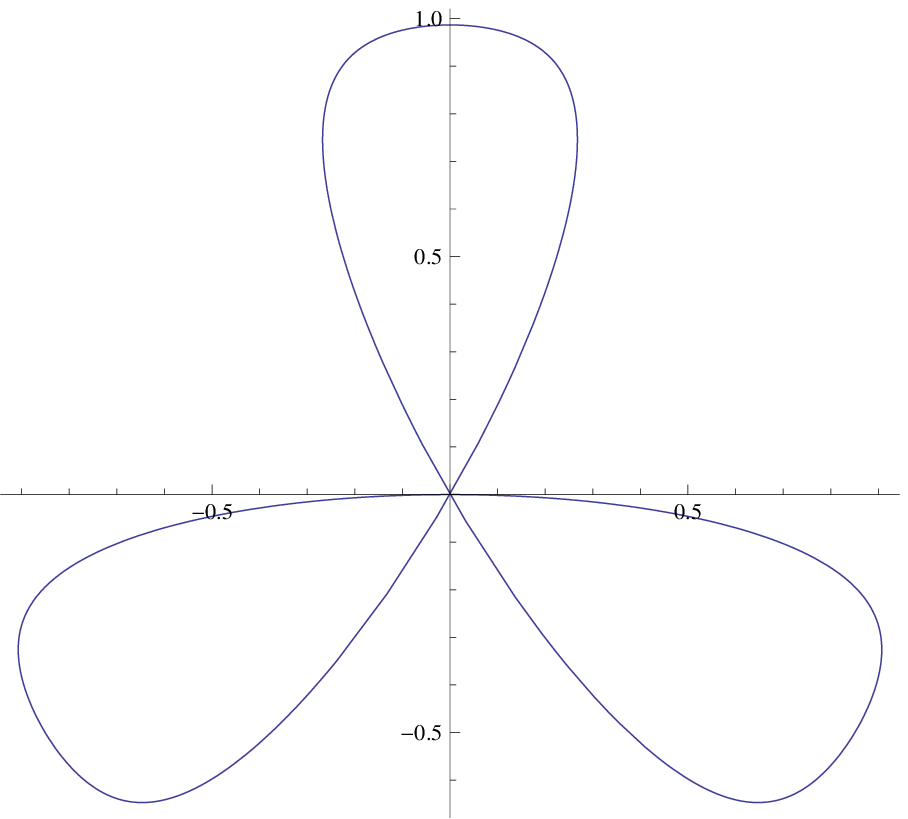}\includegraphics[width=2.5cm,height=2.5cm]{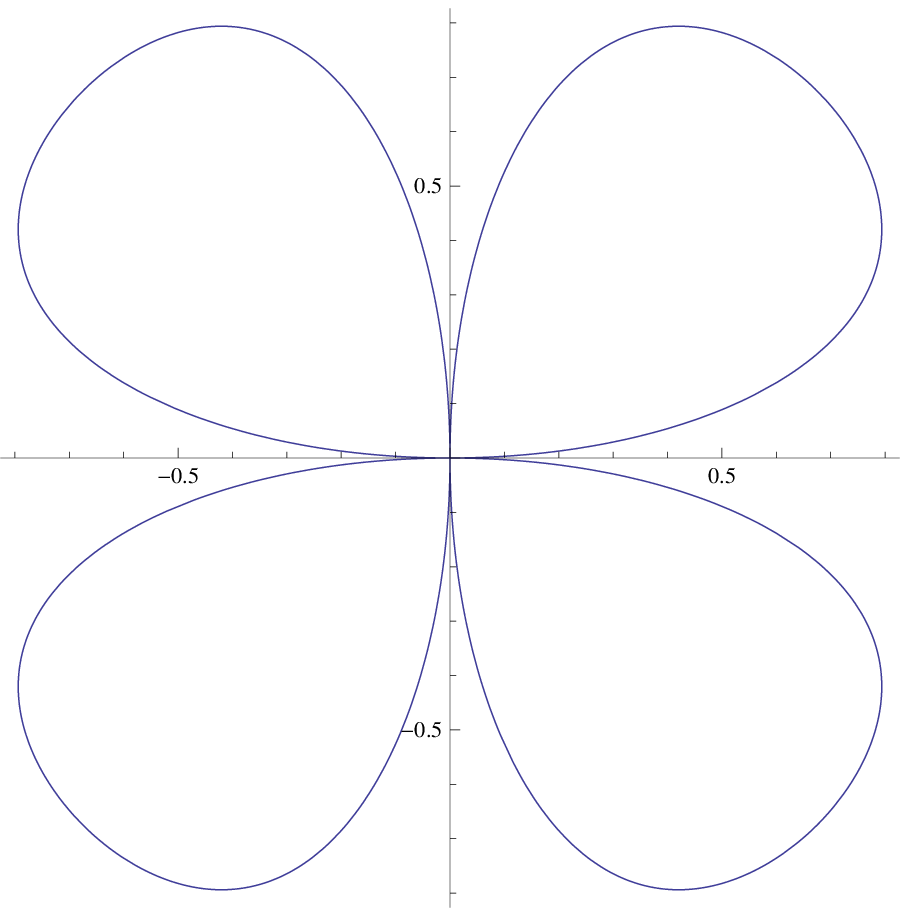}\includegraphics[width=2.5cm,height=2.5cm]{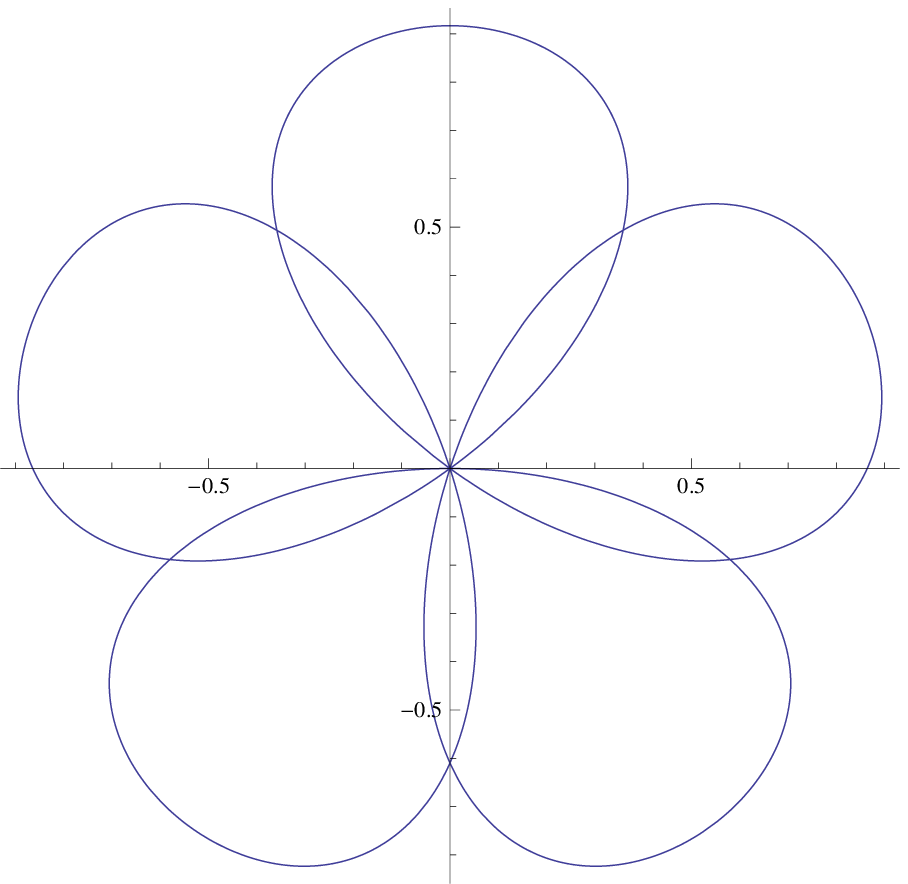}\includegraphics[width=2.5cm,height=2.5cm]{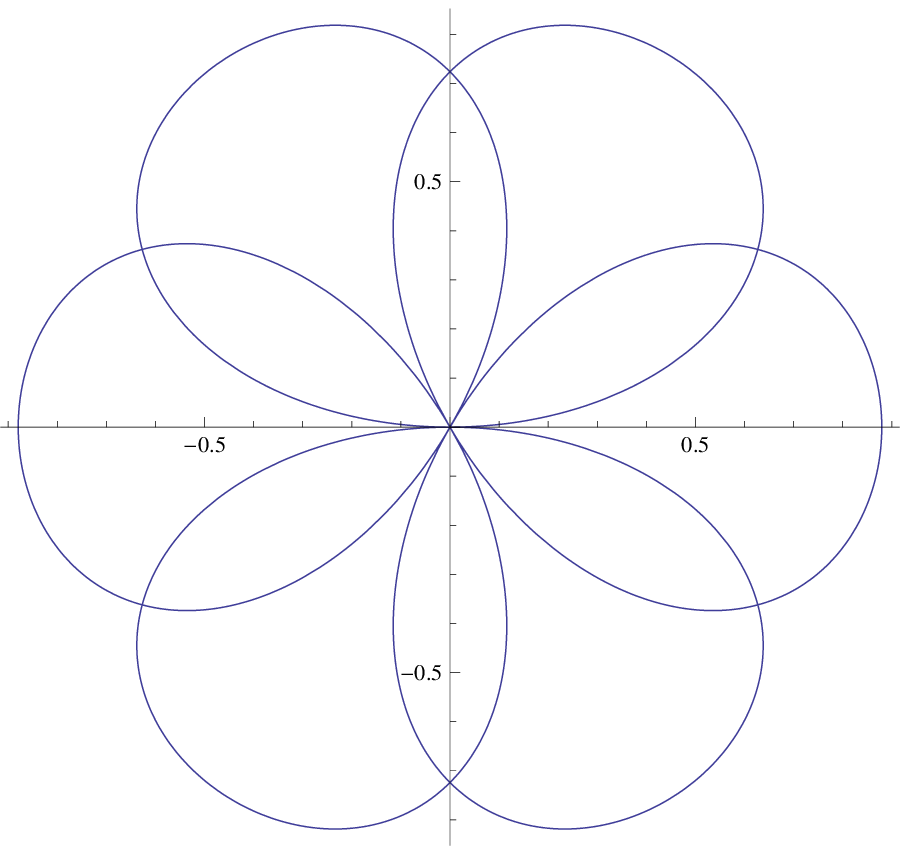}\includegraphics[width=2.5cm,height=2.5cm]{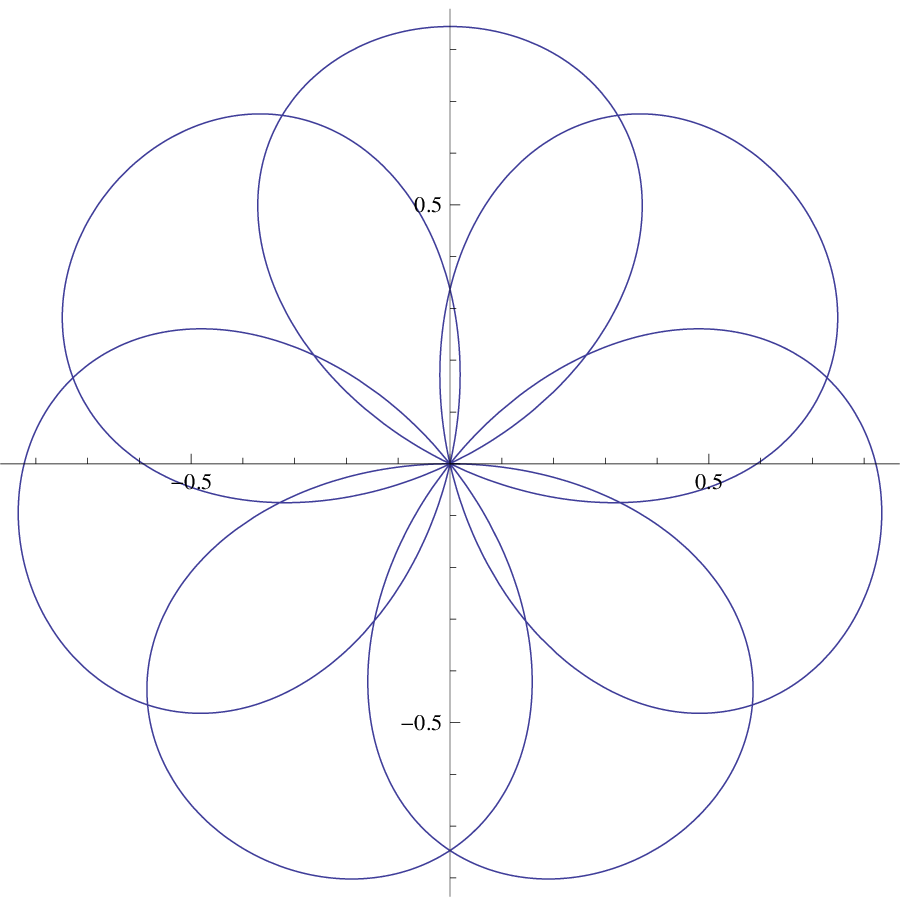}\includegraphics[width=2.5cm,height=2.5cm]{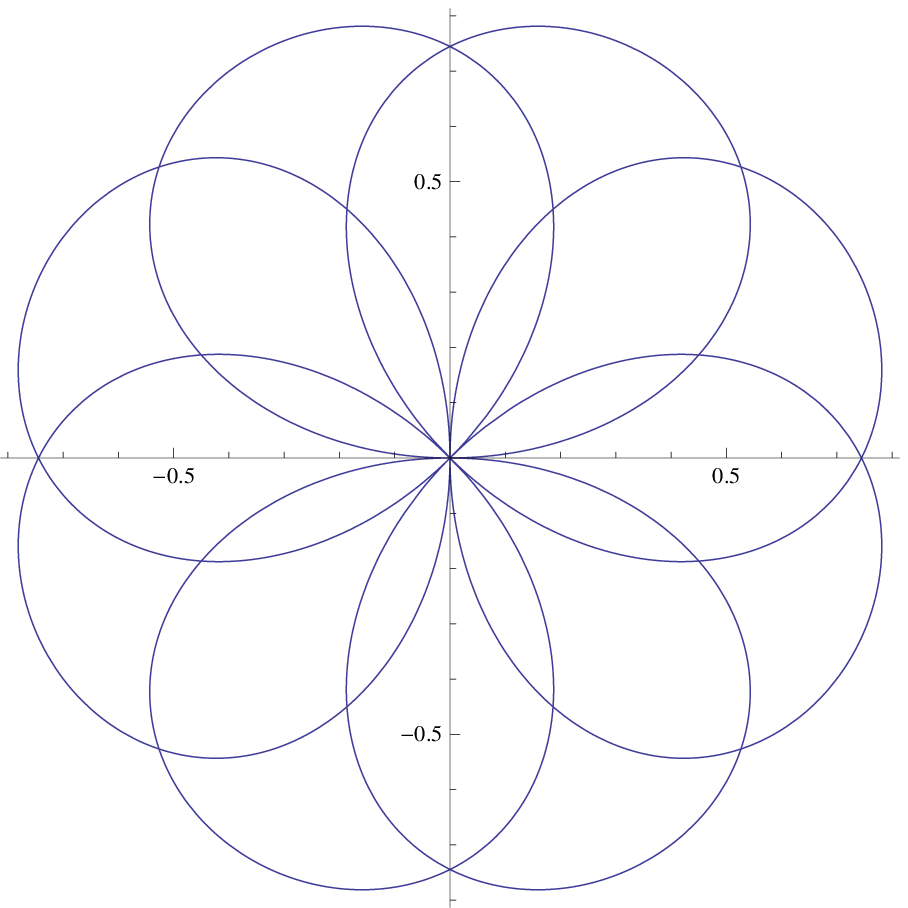}}
\caption{Profile curve of some cmc axially symmetric surface that contains the axis of symmetry.}
\end{figure}

\section{Preliminaries}

It is not difficult to show that for the immersion (\ref{the immersions}), the vector fields $\frac{\partial \phi}{\partial s}$ and $\frac{\partial \phi}{\partial t}$ define principal directions associated with the principal curvatures $\mu$ and $\lambda$  respectively and that for any $s$, the curve $t\to \phi(s,t)$ is a geodesic. The curve $\alpha$ can be re-parametrized  so that, for any $s$, the curves $\gamma(t)=\phi(s,t)$ is parametrized by arc-length.  It is known that cmc axially symmetry surfaces have not umbilical points. Besides, assuming that $\frac{\partial \phi}{\partial t}$  has length 1, we will assume that $\lambda(t)-\mu(t)$ is positive. In 2006, Wei, \cite{Wei} showed that if $g(t)=(\frac{\lambda(t)-\mu(t)}{2})^{-\frac{1}{2}}$ then,


\begin{eqnarray}\label{the ode}
g^\prime(t)^2+g(t)^2\, (1+(H+g(t)^{-2})^2)=C
\end{eqnarray}

This differential equation creates a one to one correspondence between axially symmetric surfaces with constant mean curvature $H$ and positive  solutions of the differential equation. It is not difficult to see that a positive solution exists if and only if $C\ge 2(H+\sqrt{1+H^2})$. When $C=2(H+\sqrt{1+H^2})$, the solution is constant, more precisely $g=(1+h^2)^{-\frac{1}{4}}$. In this case the curve $\alpha$ reduces to a circle centered at the origin with radius $\frac{1}{\sqrt{2+2 H (H-\sqrt{1+H^2})}}$. These solutions are known as Clifford surfaces. A direct computation shows that when $C>2(H=\sqrt{1+H^2})$

\begin{eqnarray}\label{g}
g(t)=\sqrt{  \frac{C-2H}{2+2H^2} +  \frac{\sqrt{C^2-4CH-4}}{2+2H^2}\, \sin(2\sqrt{1+H^2}\, t)   }
\end{eqnarray}

solves the differential equation (\ref{the ode}). We have that $g(t)$ is a periodic function with period $T=\frac{\pi}{\sqrt{1+H^2}}$, that reaches the maximum value $\sqrt{\frac{C-2H+\sqrt{C^2-4CH-4}}{2+2H^2}}$ when $t=\frac{T}{4}$ and the minimum value $\sqrt{\frac{C-2H-\sqrt{C^2-4CH-4}}{2+2H^2}}$ when $t=\frac{3 T}{4}$. Figure 2.1 
shows the graph of the function $g$.

The following lemma gives us explicit immersions, the proof is a direct computation and also can be found in (\cite{P1}, Theorem 2.4)


\begin{lem}\label{explicit formula}
If $C> 2(H+\sqrt{1+H^2})$ and $C\ne -\frac{1}{H}$, then the immersion $\phi$ in (\ref{the immersions}) with 

$$\alpha(t)= \sqrt{\frac{C-g(t)^2}{C}}\, (\cos(\theta(t)),  \sin(\theta(t))) \com{with} \theta(t)=\int_{\frac{T}{4}}^{\frac{T}{4}+t}\frac{\sqrt{C}\, g(\tau)(H+g(\tau)^{-2})}{C-g(\tau)^2}\, d\tau$$

has constant mean curvature $H$. The function $g$ is given by (\ref{g}). We will denote this surface as $\Sigma_{H,C}$. 

\end{lem}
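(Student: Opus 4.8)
The plan is to verify directly that the surface $\Sigma_{H,C}$ has constant mean curvature $H$ by computing its mean curvature vector in $S^3$ and reading off the constant. First I would simplify the immersion. Since $|\alpha(t)|^2=(C-g^2)/C$, one has $\sqrt{1-|\alpha|^2}=g/\sqrt C$, so writing $r=g/\sqrt C$ and $\rho=\sqrt{(C-g^2)/C}$ the immersion (\ref{the immersions}) becomes $\phi=(r\cos s,r\sin s,\rho\cos\theta,\rho\sin\theta)$ with $r^2+\rho^2=1$. In the orthonormal frame $u=(\cos s,\sin s,0,0)$, $u^\perp=(-\sin s,\cos s,0,0)$, $v=(0,0,\cos\theta,\sin\theta)$, $v^\perp=(0,0,-\sin\theta,\cos\theta)$ this reads $\phi=ru+\rho v$, $\phi_s=ru^\perp$, and $\phi_t=r'u+\rho'v+\rho\theta'v^\perp$.

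Next I would record the first fundamental form. One checks $\langle\phi_s,\phi_t\rangle=0$, $|\phi_s|^2=r^2$, and $|\phi_t|^2=r'^2+\rho'^2+\rho^2\theta'^2$. Using $\rho'=-gg'/(C\rho)$ gives $r'^2+\rho'^2=g'^2/(C-g^2)$, while the explicit formula for $\theta'$ gives $\rho^2\theta'^2=(gH+g^{-1})^2/(C-g^2)$; adding these and invoking (\ref{the ode}) in the form $g'^2+g^2+(gH+g^{-1})^2=C$ yields $|\phi_t|^2=1$. Thus $\{s,t\}$ are orthogonal principal coordinates and $t$ is arc length, consistent with the Preliminaries, and the induced metric is $\mathrm{diag}(r^2,1)$.

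The heart of the argument is the identity $\Delta_\Sigma\phi+2\phi=2HN$, valid for any surface $\Sigma\subset S^3\subset\R^4$, where $\Delta_\Sigma$ is the intrinsic Laplace--Beltrami operator applied componentwise, $N$ is the unit normal of $\Sigma$ in $S^3$, and $H=(\mu+\lambda)/2$. I would first observe that $\vec H_S:=\Delta_\Sigma\phi+2\phi$ is automatically orthogonal to $\phi$, $\phi_s$, and $\phi_t$: indeed $\Delta_\Sigma\phi$ is the $\R^4$ mean curvature vector, hence normal to $\Sigma$, while $\langle\phi,\phi_s\rangle=\langle\phi,\phi_t\rangle=0$ and $\langle\Delta_\Sigma\phi,\phi\rangle=-|\nabla_\Sigma\phi|^2=-2$ give $\langle\vec H_S,\phi\rangle=0$. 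Hence $\vec H_S$ lies on the one dimensional line spanned by $N$, and it suffices to compare a single nonzero component of $\vec H_S$ and of $N$. Using the diagonal metric, $\Delta_\Sigma\phi=\tfrac1r\big[\partial_s(\tfrac1r\phi_s)+\partial_t(r\phi_t)\big]=-\tfrac{u}{r}+\tfrac{r'}{r}\phi_t+\phi_{tt}$, whose $u$-component is $-1/r+r'^2/r+r''$; adding the $u$-component $2r$ of $2\phi$ gives $(\vec H_S)_u=-1/r+r'^2/r+r''+2r$. This depends only on $r=g/\sqrt C$, so after multiplying by $\sqrt C$ and substituting $g'^2=C-g^2-(gH+g^{-1})^2$ from (\ref{the ode}) together with its first derivative $g''=g^{-3}-(1+H^2)g$, everything cancels to $(\vec H_S)_u=-2H(gH+g^{-1})/\sqrt C$. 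Finally, solving $N\perp\phi,\phi_s,\phi_t$ shows $N$ is a unit multiple of $\rho u-rv-\frac{r'\rho-r\rho'}{\rho\theta'}v^\perp$, and a short computation using (\ref{the ode}) gives $N_u=\pm|gH+g^{-1}|/\sqrt C$. Therefore $(\vec H_S)_u/N_u=\mp2H$, a constant equal to (twice) the construction parameter $H$; orienting $N$ so that $\vec H_S=2HN$ shows $\Sigma_{H,C}$ has constant mean curvature exactly $H$.

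The step I expect to be the main obstacle is the bookkeeping: assembling $N$ from the three orthogonality relations and keeping the orientation straight, since the component quotient only reveals mean curvature $\pm H$ and pinning it to $H$ requires fixing the normal. The one genuinely nontrivial cancellation, where $\sqrt C\,(\vec H_S)_u$ reduces to $-2H(gH+g^{-1})$, is driven entirely by the ODE (\ref{the ode}) and its first derivative; note that the component computation avoids $\rho''$ and $\theta''$ altogether, since only the $u$-coefficients $r'$ and $r''$ of $\phi_t$ and $\phi_{tt}$ enter. The hypotheses $C>2(H+\sqrt{1+H^2})$ and $C\ne-\tfrac1H$ guarantee that $\rho>0$ and that $\theta'$ is well defined along the whole profile, so that $N_u\ne0$ and the quotient is legitimate throughout.
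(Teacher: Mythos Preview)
Your direct verification is correct and is exactly the approach the paper indicates: the lemma is stated with the remark that ``the proof is a direct computation and also can be found in \cite{P1}.'' One small quibble: your closing claim that the hypotheses force $N_u\ne0$ everywhere is not quite right (for $H<0$ with $C>-1/H$ the factor $gH+g^{-1}$ can vanish, as the paper itself exploits in Section~6), but since $(\vec H_S)_u=-2H(gH+g^{-1})/\sqrt{C}$ vanishes at exactly the same points this is harmless and the constancy of the mean curvature follows by continuity.
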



\begin{figure}[h]\label{graph of g}
\centerline{\includegraphics[width=7.5cm,height=4cm]{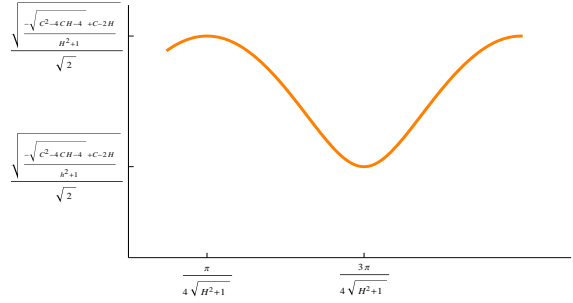}}
\caption{Graph of a non constant positive solution of (\ref{the ode}).}
\end{figure}

\begin{rem}\label{rem2.2}
The function $\theta$ defined in Lemma \ref{explicit formula} is not periodic but, for any $t\in [0,T]$ and any integer $q$, it satisfies that $\theta(t+qT)=\theta(t)+q\theta(T)$ where $T=\frac{\pi}{\sqrt{1+H^2}}$ is the period of the function $g$. Therefore, if 

$$K(H,C)= \theta(T)=\int_{\frac{T}{4}}^{\frac{5T}{4}}\frac{\sqrt{C}\, g(\tau)(H+g(\tau)^{-2})}{C-g(\tau)^2}\, d\tau$$

Then, the immersion in Lemma \ref{explicit formula} is invariant under the subgroup of $O(4)$ given by

$$\left\{\left( \begin{array}{c c c c} 1 & 0 & 0 & 0 \\
                                     0 & 1 & 0 & 0\\  
                                     0 &0 & \cos(q \, K(H,C)) & \sin(q\, K(H,C)) \\
                                     0 &0 & -\sin(q\, K(H,C)) & \cos(q\, K(H,C))  \end{array}\right)  \, : \, q\in Z \right\}$$

This group is finite if and only if, $\frac{K(H,C)}{\pi}$ is rational.
 
\end{rem}

When $H\ge0$ the function $\theta(t)$ is increasing, therefore, if $K(H,C)=\frac{2\pi}{m}$ for some integer $m\ge 1$ and some constant $H$ and $C$, then it is easy to see that the profile curve of $\Sigma_{H,C}$ is a simple curve and therefore $\Sigma_{H,C}$ is embedded. Solutions of the equation $K(H,C)=\frac{2\pi}{m}$ were found in \cite{P} and a proof that these solutions were the only ones when $H\ge0$ was given in \cite{AL} using the following lemma.


\begin{lem}\label{Andrews Li Lemma} {\bf (Andrews-Li \cite{AL})} If $a=\frac{1}{C}$, then 

$$T(H,a)=K(H,\frac{1}{a})=\int_{x_1}^{x_2} \frac{H u+a}{(1-u) \sqrt{u} \sqrt{1+H^2} \sqrt{(u-x_1) (x_2-u)}} \, du$$

where,
$$x_1= \frac{C-2 H-\sqrt{C^2 - 4 C H - 4}}{2 \left(1+H^2\right) C}\com{and} x_2 = \frac{C - 2 H + \sqrt{C^2 - 4 C H - 4}}{2 \left(1+H^2\right) C}$$

Moreover if $H\ge 0$, then  $\frac{\partial T}{\partial a}>0$.

\end{lem}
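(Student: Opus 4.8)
Since $T(H,a)$ is by definition $K(H,1/a)$, the content of the integral identity is a rewriting of $K(H,C)$ from Remark~\ref{rem2.2}. The change of variables to use is $u = g(\tau)^2/C$, chosen so that $1-u = (C-g^2)/C$ is exactly the radicand appearing in $\alpha$ in Lemma~\ref{explicit formula}. As $\tau$ runs over one period, $u$ sweeps $[x_1,x_2]$ with $x_1=(\min g^2)/C$, $x_2=(\max g^2)/C$; expanding \eqref{g} shows these coincide with the $x_1,x_2$ of the statement, and in particular $x_1+x_2 = (C-2H)/((1+H^2)C)$ and $x_1x_2 = 1/((1+H^2)C^2)$. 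Substituting $g^2 = Cu$ into Wei's equation \eqref{the ode} and using these two Vieta relations identifies $C-g'(\tau)^2 - \cdots$ with a quadratic in $u$ whose roots are $x_1,x_2$, giving
\[
g'(\tau)^2 \;=\; \frac{(1+H^2)\,C\,(u-x_1)(x_2-u)}{u}.
\]
The integrand of $K(H,C)$ in Remark~\ref{rem2.2} is a function of $g(\tau)$ alone, and $g$ is symmetric about $\tau = 3T/4$ on $[T/4,5T/4]$ (here $T=\pi/\sqrt{1+H^2}$), so $K(H,C) = 2\int_{T/4}^{3T/4}\frac{\sqrt C\, g\,(H+g^{-2})}{C-g^2}\,d\tau$; on that subinterval $g$ is decreasing, hence $g' = -\sqrt{(1+H^2)C(u-x_1)(x_2-u)/u}$, and $dg = \tfrac{\sqrt C}{2\sqrt u}\,du$ gives
\[
d\tau \;=\; \frac{dg}{g'} \;=\; -\,\frac{du}{2\sqrt{(1+H^2)\,(u-x_1)(x_2-u)}}.
\]
Finally, replace in the integrand $C-g^2 = C(1-u)$, $\sqrt C\,g = C\sqrt u$, $H+g^{-2} = (HCu+1)/(Cu)$, and $(HCu+1)/C = Hu+a$; together with the interchange of limits this produces exactly the stated formula for $T(H,a)=K(H,1/a)$.

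For the monotonicity, write $b = 1+H^2$ and $a = 1/C > 0$. From the definitions $x_1+x_2 = (1-2Ha)/b$, $x_1x_2 = a^2/b$ and $(1-x_1)(1-x_2) = (1+HC)^2/(bC^2)$; since $H\ge 0$ forces $1+HC>0$, and $C\ge 2(H+\sqrt{1+H^2})$ gives $0\le Ha<\tfrac12$ and hence $0< x_1+x_2\le 1$, one concludes $0< x_1< x_2<1$, so the integrand of $T$ is strictly positive on $(x_1,x_2)$. To differentiate under the integral sign I would first remove the endpoint singularities and the $a$-dependence of the limits by the substitution $u = \tfrac{x_1+x_2}{2} + \tfrac{x_2-x_1}{2}\cos\varphi$, $\varphi\in[0,\pi]$, for which $du/\sqrt{(u-x_1)(x_2-u)} = -\,d\varphi$ and
\[
T(H,a) \;=\; \frac{1}{\sqrt b}\int_0^\pi \frac{Hu+a}{(1-u)\sqrt u}\,d\varphi,\qquad u = u(\varphi,a) = \frac{p+q\cos\varphi}{2b},
\]
with $p = 1-2Ha$, $q = \sqrt{1-4Ha-4a^2}$. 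Now the limits are fixed and the integrand is smooth in $(\varphi,a)$, so $\partial T/\partial a = \tfrac{1}{\sqrt b}\int_0^\pi \partial_a\big[(Hu+a)(1-u)^{-1}u^{-1/2}\big]\,d\varphi$ with $\partial_a u = -\big(Hq + (H+2a)\cos\varphi\big)/(bq)$ inserted explicitly.

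The crux, and the step I expect to be hardest, is showing this last integral is positive for every admissible $(H,a)$ with $H\ge 0$. After inserting $\partial_a u$, the integrand of $\partial T/\partial a$ is a rational function of $\cos\varphi$ over $(1-u)^2\sqrt u$, times a fixed positive constant. My plan would be: (i) isolate the contribution of $\partial_a(Hu+a) = 1 + H\,\partial_a u$, which for $H\ge 0$ is controlled since $H\,\partial_a u \ge -H(Hq+H+2a)/(bq)$; (ii) recognize, among the terms coming from differentiating $(1-u)^{-1}$ and $u^{-1/2}$, a total $\varphi$-derivative, which integrates to zero because $u_\varphi$ vanishes at $\varphi = 0,\pi$; and (iii) verify that what remains is pointwise nonnegative, using $0< x_1< x_2<1$, $x_1x_2 = a^2/b$ and $H+2a>0$. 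An alternative that may shorten the algebra is to first symmetrize $T$ under the involution $u\mapsto x_1x_2/u$, which preserves $[x_1,x_2]$ and the radical $\sqrt{(u-x_1)(x_2-u)}$ up to the Jacobian and replaces $\frac{Hu+a}{1-u}$ by $\frac12\big(\frac{Hu+a}{1-u} + \frac{a(bu+Ha)}{bu-a^2}\big)$, an expression whose $a$-dependence is more transparent. I expect step (iii) to be where the hypothesis $H\ge 0$ is genuinely used; once it holds, strict positivity of the integrand on a set of positive measure gives $\partial T/\partial a > 0$.
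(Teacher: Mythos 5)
Your derivation of the first assertion (the integral identity) is correct: the substitution $u=g(\tau)^2/C$, the Vieta relations $x_1+x_2=\frac{C-2H}{(1+H^2)C}$, $x_1x_2=\frac{1}{(1+H^2)C^2}$, the identity $g'^2=(1+H^2)C(u-x_1)(x_2-u)/u$ extracted from Wei's equation, the symmetry of $g$ about $\tau=3T/4$ and the sign/limit bookkeeping all check out; this is the natural route, and the paper itself simply quotes this formula from Andrews--Li rather than rederiving it.

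The second assertion, $\frac{\partial T}{\partial a}>0$ for $H\ge0$, is however not proved in your proposal. After the cosine substitution that fixes the limits, everything hinges on the positivity of $\int_0^\pi \partial_a\big[(Hu+a)(1-u)^{-1}u^{-1/2}\big]\,d\varphi$, and your items (i)--(iii) are a plan rather than an argument; you yourself flag (iii) as the crux and leave it unverified. Note that pointwise positivity is genuinely doubtful: $\partial_a u=-\frac{Hq+(H+2a)\cos\varphi}{bq}$ changes sign on $(0,\pi)$ (it is negative at $\varphi=0$ and positive at $\varphi=\pi$ since $Hq<H+2a$), so the differentiated integrand has competing contributions and the conclusion requires exactly the cancellation you only gesture at. The paper's own proof (reproduced in Proposition 4.1, following Andrews--Li) avoids this by a complex-variable device: it writes $K(H,C)=-\frac12\int_{\gamma_1}\frac{Hz+C^{-1}}{(1-z)f(z)}\,dz$ with $f(z)^2=(1+H^2)\,z\,(z-x_1)(x_2-z)$, deforms the contour off the branch cut $[x_1,x_2]$ to a contour $\gamma_3$ on which the integrand is smooth in $C$, and only then differentiates; because the $C$-dependence enters solely through the numerator $Hz+C^{-1}$ and through $x_1+x_2$, $x_1x_2$ inside $f^2$, the derivative collapses to the single term $-z^2/(C^2f(z)^3)$, and pushing $\gamma_3$ onto the negative real axis yields $\frac{\partial K}{\partial C}=-\frac{1}{C^2}\int_{-\infty}^{0}\frac{x^2\,dx}{\sqrt{(1+H^2)(-x)(x_1-x)(x_2-x)}}<0$, i.e. $\frac{\partial T}{\partial a}>0$, with the sign manifest. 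Until you either carry out the cancellation in step (iii) or adopt such an off-the-cut representation, the ``moreover'' half of the lemma remains unproven in your write-up.
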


The variation of the angle function $\theta$  over a period of the function $g$  for the profile curve of minimal axially symmetric hypersurfaces in $S^{n+1}$ was extensively studied by Otsuki. For a paper that reviews the story of this problem we refer to \cite{O2}.  Otsuki was the first one who provided explicit examples of non-isoparametric axially symmetric minimal hypersurfaces in $S^{n+1}$ \cite{O}. 

The main goal of this paper is to decide if, for  any $H<0$, the surface $\Sigma_{H,C}$ is embedded. We will need to deal with three issues. The first one is that even though the family of surfaces $\Sigma_{H,C}$ varies continuously with the parameters $H$ and $C$, the immersions given in Lemma \ref{explicit formula} are not well defined when $C=-\frac{1}{H}$. The second issue is that when $H$ is negative the derivative of the function $\theta$ is positive everywhere for some values of $H$ and $C$ and it changes sign for other values of $H$ and $C$. We need to analyze when each case happens and analyze both cases. The third issue is that the function $K(H,C)$ is not continuous when $C=-\frac{1}{H}$, it has a jump discontinuity. We need to analyze the jump of this discontinuity.


\section{The immersions}

In this section we provide a formula that parametrizes all the immersions $\Sigma_{H,C}$. As a consequence we obtain the continuity of this family of immersions with respect to the parameters $H,C$. It turns out that the reason why the formula given in Lemma \ref{explicit formula} fails when $C=-\frac{1}{H}$ is because it essentially uses polar coordinates for the profile curve and we can check that when $C=-\frac{1}{H}$ the profile curve passes trough the origin. A similar formula for all cmc hypersurfaces in $S^{n+1}$ with axially symmetry symmetry was provided by Wu in  \cite{W}.


\begin{lem}\label{exp for 2}
If $C> 2(H+\sqrt{1+H^2})$, then the immersion $\phi$ in (\ref{the immersions}) with profile curve

$$\beta= \frac{1}{ \sqrt{C {g^\prime}\, ^2+Cg^2}}\, \left(-\sqrt{C} g^\prime \cos(\theta)- (Hg^2+1) \sin(\theta), \sqrt{C} g^\prime \sin(\theta) - (Hg^2+1)\cos(\theta) \right)$$

with

$$ \theta(t)=\int_{\frac{T}{4}}^{\frac{T}{4}+t}\frac{\sqrt{C}\, g(\tau)(H-g(\tau)^{-2})}{{g^\prime(\tau)}\, ^2+g(\tau)^2}\, d\tau$$

has constant mean curvature $H$.

\end{lem}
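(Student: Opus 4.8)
\emph{Proof idea.} The plan is to reduce Lemma~\ref{exp for 2} to Lemma~\ref{explicit formula} whenever $C\ne -\tfrac1H$, by showing that the profile curve $\beta$ there is the image of the profile curve $\alpha$ of Lemma~\ref{explicit formula} under a fixed rotation of $\R^2$, and then to obtain the case $C=-\tfrac1H$ by continuity in the parameter $C$. First I would record two consequences of~(\ref{the ode}): rewriting that equation as $g'^2+g^2+(Hg+g^{-1})^2=C$ and differentiating gives $g''=g^{-3}-(1+H^2)g$, while reading it off directly gives $g'^2+(Hg+g^{-1})^2=C-g^2$ and $(Hg^2+1)^2=g^2(C-g'^2-g^2)$, hence $Cg'^2+(Hg^2+1)^2=(C-g^2)(g'^2+g^2)$. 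Using this last identity, the cross terms in $|\beta(t)|^2$ cancel and one finds $|\beta(t)|^2=1-\dfrac{g(t)^2}{C}<1$, so $\beta$ takes values in the open unit disk, $\sqrt{1-|\beta(t)|^2}=g(t)/\sqrt C$, and the immersion $\phi_\beta$ of~(\ref{the immersions}) with profile $\beta$ is well defined and has the \emph{same} ``radial'' function $\sqrt{1-|\beta|^2}$ as the immersion $\phi_\alpha$ built from $\alpha$ in Lemma~\ref{explicit formula}.

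Assume now $C\ne -\tfrac1H$. Then $g'$ and $Hg^2+1$ never vanish simultaneously, so $\beta$ has a well-defined polar form $\beta(t)=\sqrt{1-g^2/C}\,\bigl(\cos\psi(t),\sin\psi(t)\bigr)$; grouping $e^{-i\theta}$ out of $\beta_1+i\beta_2$ (with $\theta$ the angle function of Lemma~\ref{exp for 2}) one gets $\psi=\text{const}+\arg\!\bigl(\sqrt C\,g'+i(Hg^2+1)\bigr)-\theta$. Differentiating and substituting $g''=g^{-3}-(1+H^2)g$,
$$\psi'=\frac{\sqrt C\,\bigl(2Hgg'^2-(Hg^2+1)g''\bigr)}{(C-g^2)(g'^2+g^2)}-\frac{\sqrt C\,g\,(H-g^{-2})}{g'^2+g^2}=\frac{\sqrt C\,(Hg+g^{-1})}{C-g^2}=\frac{\sqrt C\,g\,(H+g^{-2})}{C-g^2},$$
the middle step being the algebraic identity $2Hgg'^2-(Hg^2+1)g''-g(H-g^{-2})(C-g^2)=(Hg+g^{-1})(g'^2+g^2)$, which follows from $(Hg+g^{-1})^3=H^3g^3+3H^2g+3Hg^{-1}+g^{-3}$ together with $C-(Hg+g^{-1})^2=g'^2+g^2$. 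But $\dfrac{\sqrt C\,g\,(H+g^{-2})}{C-g^2}$ is exactly the derivative of the angle function of Lemma~\ref{explicit formula} (written $\theta$ there), whose profile curve is $\alpha=\sqrt{(C-g^2)/C}\,(\cos\theta,\sin\theta)$. Hence $\psi$ and that angle function differ by a constant, $\beta$ is the image of $\alpha$ under a fixed rotation $\rho\in SO(2)$, and therefore $\phi_\beta=\bigl(\mathrm{diag}(I_2,\rho)\bigr)\circ\phi_\alpha$, a rigid motion of $S^3$. Since $\phi_\alpha$ has constant mean curvature $H$ by Lemma~\ref{explicit formula}, so does $\phi_\beta$.

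It remains to treat $C=-\tfrac1H$, which for $H<0$ lies in the admissible range $C>2(H+\sqrt{1+H^2})$. There one checks that the maximum value of $g^2$ equals $C$ — equivalently, that the profile curve passes through the origin — so the polar description degenerates at that one point and the rotation argument does not apply directly. However, the right-hand side of the formula for $\beta$, hence the immersion $\phi_\beta(s,t)$ together with all of its $s,t$-derivatives, depends real-analytically on $C$ throughout the range, since the denominator $\sqrt{C(g'^2+g^2)}$ never vanishes ($g'^2+g^2\ge g^2>0$) and $g$ is real-analytic in $(t,C)$. The mean curvature of $\phi_\beta$ is a rational function of these derivatives with non-vanishing denominator, hence continuous in $C$, and it equals $H$ for every $C\ne-\tfrac1H$ by the previous paragraph; by continuity it equals $H$ at $C=-\tfrac1H$ as well. (Alternatively one can avoid the case distinction altogether by a single direct computation valid for all admissible $C$: with $s,t$ as coordinates one has $|\phi_s|^2=g^2/C$, $\langle\phi_s,\phi_t\rangle=0$ and $|\phi_t|^2=1$ — the last using $g'^2+(Hg+g^{-1})^2=C-g^2$ — after which one computes the unit normal and the second fundamental form and verifies $\mu+\lambda=2H$ via~(\ref{the ode}) and $g''=g^{-3}-(1+H^2)g$, with no division by $C+\tfrac1H$ anywhere.) In either route the only step that is more than a line of algebra is the identity $2Hgg'^2-(Hg^2+1)g''-g(H-g^{-2})(C-g^2)=(Hg+g^{-1})(g'^2+g^2)$, and that is where the main effort lies.
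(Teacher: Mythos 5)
Your proposal is correct and takes essentially the same approach as the paper: the paper's proof likewise identifies $\beta$ with the curve $\alpha$ of Lemma \ref{explicit formula} by checking that $|\beta(t)|^2=|\alpha(t)|^2$ and that the angular derivative of $\beta$ equals $\frac{\sqrt{C}\,g(H+g^{-2})}{C-g^2}$, so that the two profile curves agree up to a rotation when $C\ne-\frac{1}{H}$. The only difference is one of detail: you write out the key algebraic identity and settle the exceptional case $C=-\frac{1}{H}$ by continuity (or by the uniform direct computation), whereas the paper simply appeals to ``a direct computation'' for that.
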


\begin{proof}
The proof is a direct computation. Also we can show that for any $C \ne -\frac{1}{H}$ with $H<0$, the curve $\alpha$ defined in Lemma \ref{explicit formula} agrees with the curve $\beta$ defined in this lemma. We can see this by directly checking that

\begin{eqnarray}\label{radius}
|\alpha(t)|^2=|\beta(t)|^2=\frac{C+2 H+2 C H^2-\sqrt{-4+C^2-4 C H} \sin(2 \sqrt{1+H^2}\, t)}{2 C+2 C H^2}
\end{eqnarray}
and also checking that if $a(t)$ denotes the angle from the positive direction of the $x$-axis to the position vector $\beta(t)$, then $a^\prime(t)=\frac{\sqrt{C}\, g(t)(H+g(t)^{-2})}{C-g(t)^2}$, which agrees with the derivative of the angle from the  positive direction of the $x$-axis to the position vector $\alpha(t)$.

\end{proof}

\begin{mydef} \label{sigmas}
For any $C>2(H+\sqrt{1+H^2})$, we will refer to the surface in $S^3$ given by the immersion in Lemma \ref{exp for 2}  as $\Sigma_{H,C}$.

\end{mydef}

\section{The function $K(H,C)$}

In this section, using the technique introduced in \cite{AL} we will study the function $K(H,C)$ as a function of $C$ for any real number $H$. We have the following proposition.


\begin{prop} \label{K} For any real number $H$ and any $ C>2(H+\sqrt{1+H^2}) $ different from $-\frac{1}{H}$, let us define $K(C,H)$ as in Lemma (\ref{Andrews Li Lemma}), that is,

$$K(H,C)=\int_{x_1}^{x_2} \frac{H u+C^{-1}}{(1-u) \sqrt{u} \sqrt{1+H^2} \sqrt{(u-x_1) (x_2-u)}} \, du$$

where,

$$x_1= \frac{C-2 H-\sqrt{C^2 - 4 C H - 4}}{2 \left(1+H^2\right) C}\com{and} x_2 = \frac{C - 2 H + \sqrt{C^2 - 4 C H - 4}}{2 \left(1+H^2\right) C}$$

The following statements are true:

\begin{itemize}

\item
The function $C \to K(H,C)$ is decreasing, differentiable and continuous at any value $C\ne-\frac{1}{H}$

\item

$$\lim_{C\to 2(H+\sqrt{1+H^2})^+} K(H,C) = \pi \sqrt{2-\frac{4H}{\sqrt{4+4H^2} }}$$

\item

if $H>0$ then,

$$\lim_{C\to \infty} K(H,C)=2 \, \hbox{\rm arccot}(H) \com{with} 0 < \hbox{\rm arccot}(H)<\frac{\pi}{2} $$

\item

if $H<0$ then,

$$\lim_{C\to \infty} K(H,C)=2 \, \hbox{\rm arccot}(H) \com{with} -\frac{\pi}{2} < \hbox{\rm arccot}(H)<0 $$
\item

if $H<0$ then,

\begin{eqnarray*}
\lim_{C\to -\frac{1}{H}^-} K(H,C)& =& \int_0^1\frac{-H}{\sqrt{v(1-v)(H^2+v)}} \, dv+\pi\\
\end{eqnarray*}
and
\begin{eqnarray*}
\lim_{C\to -\frac{1}{H}^+} K(H,C)& =& \int_0^1\frac{-H}{\sqrt{v(1-v)(H^2+v)}} \, dv-\pi\\
\end{eqnarray*}

\item

The function $b(H)=\int_0^1\frac{-H}{\sqrt{v(v-1)(H^2+v)}} \, dv$ defined in the interval $(-\infty,0)$ is strictly decreasing. Moreover

$$\lim_{H\to - \infty} b(H)=\pi \com{and} \lim_{H\to 0} b(H) = 0$$

\end{itemize}

\end{prop}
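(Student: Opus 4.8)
The plan is to put $K(H,C)$ into a form where limits and monotonicity become transparent, by the substitution $u=\frac{x_1+x_2}{2}+\frac{x_2-x_1}{2}\sin\psi$, which gives $\frac{du}{\sqrt{(u-x_1)(x_2-u)}}=d\psi$ and hence
\[
K(H,C)=\frac{1}{\sqrt{1+H^{2}}}\int_{-\pi/2}^{\pi/2}\frac{H\,u(\psi)+C^{-1}}{\bigl(1-u(\psi)\bigr)\sqrt{u(\psi)}}\,d\psi .
\]
I would first record, from $x_1,x_2$ being the roots of $u^{2}-\frac{C-2H}{(1+H^{2})C}u+\frac{1}{(1+H^{2})C^{2}}$, the two identities $x_1x_2=\frac{1}{(1+H^{2})C^{2}}$ and $(1-x_1)(1-x_2)=1-(x_1+x_2)+x_1x_2=\frac{(HC+1)^{2}}{(1+H^{2})C^{2}}$; the second shows $1\notin[x_1,x_2]$ for every admissible $C\neq-\frac1H$ and that $C=-\frac1H$ is precisely the value at which the endpoint $x_2$ reaches $1$ (one checks $x_2<1$ throughout the range $C>2(H+\sqrt{1+H^{2}})$, using that $\sqrt{1+H^{2}}\,C_0>1$). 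Differentiability and continuity away from $C=-\frac1H$ then follow by differentiating the displayed integral under the integral sign (the integrand and its $C$-derivative being bounded on compact $C$-intervals avoiding $-\frac1H$). For the limit as $C\to2(H+\sqrt{1+H^{2}})^{+}$, the interval collapses: $x_1,x_2\to x_\ast=\frac{1}{\sqrt{1+H^{2}}\,C_0}\in(0,1)$ with $C_0=2(H+\sqrt{1+H^{2}})$, $u(\psi)\to x_\ast$ uniformly, and the integral tends to $\pi\cdot\frac{Hx_\ast+C_0^{-1}}{(1-x_\ast)\sqrt{x_\ast}\sqrt{1+H^{2}}}$, which simplifies to $\pi\sqrt{2-\frac{4H}{\sqrt{4+4H^{2}}}}$. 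For the limit as $C\to\infty$ we have $x_1\to0$, $x_2\to\frac{1}{1+H^{2}}$, $C^{-1}\to0$ (and $x_1\sim C^{-2}$, so $C^{-1}/\sqrt{u}$ stays bounded near the collapsing left endpoint), whence dominated convergence gives $K\to\frac{H}{\sqrt{1+H^{2}}}\int_{0}^{1/(1+H^{2})}\frac{du}{(1-u)\sqrt{\frac1{1+H^{2}}-u}}$; this elementary integral evaluates to $2\operatorname{sgn}(H)\arctan\frac{1}{|H|}=2\operatorname{arccot}(H)$, giving the asserted value and sign for $H>0$ and for $H<0$.

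The main obstacle will be the jump at $C=-\frac1H$ when $H<0$. My plan is to split the numerator as $Hu+C^{-1}=-H(1-u)+(H+C^{-1})$, so that $K=I_1+(H+C^{-1})\,I_2$ with $I_1=\frac{-H}{\sqrt{1+H^{2}}}\int_{-\pi/2}^{\pi/2}\frac{d\psi}{\sqrt{u(\psi)}}$ (no pole) and $I_2=\frac{1}{\sqrt{1+H^{2}}}\int_{-\pi/2}^{\pi/2}\frac{d\psi}{(1-u(\psi))\sqrt{u(\psi)}}$. As $C\to-\frac1H$ the endpoints tend to $x_1\to\frac{H^{2}}{1+H^{2}}$, an interior point of $(0,1)$, and $x_2\to1$; hence $I_1$ converges (from both sides) to $\int_{H^{2}/(1+H^{2})}^{1}\frac{-H\,du}{\sqrt{1+H^{2}}\,\sqrt u\,\sqrt{(u-\frac{H^{2}}{1+H^{2}})(1-u)}}$, and the affine change $u=\frac{H^{2}+v}{1+H^{2}}$ identifies this limit with $b(H)=\int_0^1\frac{-H\,dv}{\sqrt{v(1-v)(H^{2}+v)}}$. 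In the other term, $H+C^{-1}=\frac{H(C+\frac1H)}{C}\sim-H^{2}(C+\frac1H)$ vanishes linearly, while $I_2$ diverges because the pole $u=1$ approaches the endpoint $x_2$; the decisive input is that $C=-\frac1H$ is a critical point of $C\mapsto x_2(C)$, and a direct Taylor expansion gives $1-x_2=H^{4}\bigl(C+\frac1H\bigr)^{2}+o\bigl((C+\frac1H)^{2}\bigr)$ (using $x_2(-\frac1H)=1$ and $x_2'(-\frac1H)=0$). A local analysis near $\psi=\frac\pi2$ then yields $I_2\sim\frac{\pi}{\sqrt{1-x_2}}\sim\frac{\pi}{H^{2}\,|C+\frac1H|}$, so $(H+C^{-1})I_2\to-\pi\operatorname{sgn}\bigl(C+\frac1H\bigr)$, i.e.\ $+\pi$ as $C\to(-\frac1H)^{-}$ and $-\pi$ as $C\to(-\frac1H)^{+}$; adding the two pieces, $\lim_{C\to(-1/H)^{\mp}}K(H,C)=b(H)\pm\pi$. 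This competition between the linearly vanishing factor and the diverging integral, governed by the degeneracy $x_2'(-\frac1H)=0$, is where I expect the argument to need the most care.

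For the monotonicity of $C\mapsto K(H,C)$ I would split into cases: for $H\ge0$ it is Lemma \ref{Andrews Li Lemma} ($\partial T/\partial a>0$ with $a=1/C$, hence $\partial K/\partial C<0$); for $H<0$ I would carry out the same Andrews--Li differentiation-under-the-integral computation separately on $\bigl(2(H+\sqrt{1+H^{2}}),-\frac1H\bigr)$ and on $\bigl(-\frac1H,\infty\bigr)$ and verify the resulting sign is negative on each piece — a finite computation rather than a new idea — which together with the downward jump of size $2\pi$ found above shows $K$ is decreasing on the whole admissible set. Finally, for the last bullet the plan is elementary: writing $H=-k$ with $k>0$, $b(H)=\int_0^1\frac{1}{\sqrt{v(1-v)}}\cdot\frac{k}{\sqrt{k^{2}+v}}\,dv$, and since for each fixed $v\in(0,1)$ the map $k\mapsto\frac{k}{\sqrt{k^{2}+v}}$ has derivative $\frac{v}{(k^{2}+v)^{3/2}}>0$, $b$ is strictly increasing in $k$, hence strictly decreasing in $H$ on $(-\infty,0)$; as $k\to\infty$, $\frac{k}{\sqrt{k^{2}+v}}\uparrow1$ and monotone convergence gives $b\to\int_0^1\frac{dv}{\sqrt{v(1-v)}}=\pi$, while as $k\to0^{+}$, $\frac{k}{\sqrt{k^{2}+v}}\to0$ pointwise under the integrable dominating function $\frac1{\sqrt{v(1-v)}}$, so $b\to0$.
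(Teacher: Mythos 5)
Your proposal is essentially correct, but it follows a genuinely different route from the paper. The paper runs the whole argument through the Andrews--Li complex-analytic machinery: it rewrites $K(H,C)$ as a contour integral $-\frac12\int_{\gamma_1}\frac{Hz+C^{-1}}{(1-z)f(z)}\,dz$, deforms the contour so that the residue of the pole at $z=1$ produces the $\mp\pi$ terms, identifies $-\frac12\int_{\gamma_3}G(z)\,dz$ with $b(H)$ by a second contour deformation, and gets monotonicity from the identity $\partial_C\bigl(\frac{Hz+C^{-1}}{(1-z)f(z)}\bigr)=-\frac{z^2}{C^2f(z)^3}$, which after pushing $\gamma_3$ onto the negative real axis gives $\partial K/\partial C=-\frac{1}{C^2}\int_{-\infty}^{0}\frac{x^2\,dx}{\sqrt{(1+H^2)(-x)(x_1-x)(x_2-x)}}<0$ for every real $H$. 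You instead stay entirely real-variable: the $\sin\psi$ substitution plus dominated convergence handles continuity and the limits at $C_0=2(H+\sqrt{1+H^2})$ and at $C\to\infty$ (which the paper simply quotes from \cite{P}), and for the jump you split $Hu+C^{-1}=-H(1-u)+(H+C^{-1})$, identify the regular part with $b(H)$ by the affine change $u=\frac{H^2+v}{1+H^2}$, and balance the linearly vanishing factor $H+C^{-1}\sim-H^2(C+\frac1H)$ against the divergence $I_2\sim\pi/\sqrt{1-x_2}$, using the exact identity $(1-x_1)(1-x_2)=\frac{(HC+1)^2}{(1+H^2)C^2}$ to get $1-x_2\sim H^4(C+\frac1H)^2$. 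I checked these computations (the constant in $I_2\sim\pi/\sqrt{1-x_2}$, the simplification of the collapsed-interval limit to $\pi\sqrt{2-\frac{2H}{\sqrt{1+H^2}}}$, and the $2\,\mathrm{sgn}(H)\arctan(1/|H|)$ evaluation) and they are right; your elementary monotone-convergence proof of the last bullet is in fact more complete than the paper's, which only invokes an elliptic-integral formula for the limits of $b$ and never argues monotonicity of $b$ explicitly.

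The one soft spot is the monotonicity of $C\mapsto K(H,C)$ for $H<0$, which you defer as ``the same Andrews--Li differentiation-under-the-integral computation \dots a finite computation rather than a new idea.'' As literally described this is not enough: if you differentiate the real integral (even in the $\psi$-parametrized form) under the integral sign, the resulting integrand is not of one sign, so there is no sign to ``verify'' pointwise. The sign only becomes manifest after the contour shift (or some equivalent trick): the Andrews--Li identity $\partial_C\bigl(\frac{Hz+C^{-1}}{(1-z)f(z)}\bigr)=-\frac{z^2}{C^2 f(z)^3}$ together with moving the integration to the negative real axis yields a manifestly negative expression with no hypothesis on the sign of $H$, which is exactly how the paper disposes of this bullet. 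So the claim you defer is true and the extension to $H<0$ is indeed routine, but only via the complex-analytic step, not via a naive real differentiation; your write-up should either import that contour computation or supply some other argument for the sign. With that step filled in, your decreasing-on-each-piece plus downward jump of $2\pi$ argument correctly gives monotonicity on the whole admissible range.
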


\begin{proof}

The proof of the first part of this proposition follows the proof of Lemma \ref{Andrews Li Lemma}  presented in \cite{AL} and we will present the proof here in order to make clear the behavior of the function $K(H,C)$ when $(H,C)$ is near the hyperbola $C=-\frac{1}{H}$. Let $U=\mathbb{C}- \{x+i y\in \mathbb{C}: x\ge0 \, \hbox{and}\, y=0\}$. In the set $U$, let us define the complex function $\hbox{sr}$ as

$$\hbox{sr}(z)=\sqrt{|z|}\, \e^\frac{\theta}{2}\com{where} z=|z|\e^{i\theta}\com{with} 0<\theta<2\pi$$

Clearly the function $\hbox{sr}$ is an analytic function that satisfies $\hbox{sr}^2(z)=z$. A direct computation shows that  $0<x_1<x_2\le1$ and, $x_2=1$ if and only if $H<0$ and $C= -\frac{1}{H}$. Let  $l_1=\{x+iy:y=0,x_1<x<x_2\}$ and $l_2=\{x+iy:y=0, x<0\}$.
Since the mobius transformation $T(z)=\frac{z-x_1}{x_2-z}$ sends the segment $l_1=\{x+iy:y=0,x_1<x<x_2\}$ to the set of positive real numbers, the function $\hbox{sr}(T(z))$ is well defined for all $z\notin l_1$. We also have that the function $\hbox{sr}(-z)$ is well defined for all $z\notin l_2$. Let $\Omega$ be the complement of the set $l_1\cup l_2\cup\{0,x_1,x_2\}$ and let $f:\Omega\to \mathbb{C}$ be the holomorphic function

$$f(z)=-\, i\, \sqrt{1+H^2}\,  \hbox{sr}(-z)\, (x_2-z)\, \hbox{sr}(\frac{z-x_1}{x_2-z})$$

Let $\gamma_1$, $\gamma_2$ and $\gamma_3$ be the curves given in Figure 4.1.
 Using the fact that for every $x_1<x<x_2$ and and $\epsilon\ne0$ we have that

$$\lim_{\epsilon\to 0^+} \hbox{sr}\left(\frac{(x+i \epsilon)-x_1}{x_2-(x+i\epsilon)}\right)=\sqrt{\frac{x-x_1}{x_2-x}}\com{and} 
\lim_{\epsilon\to 0^-} \hbox{sr}\left(\frac{(x+i \epsilon)-x_1}{x_2-(x+i\epsilon)}\right)=-\sqrt{\frac{x-x_1}{x_2-x}}$$

and

$$ \lim_{\epsilon\to 0} \hbox{sr} (-(x+i\epsilon))=i\, \sqrt{x}  $$ 

we can prove, as pointed out in \cite{AL} that,

$$K(H,C)= \int_{x_1}^{x_2}   \frac{H u + C^{-1}}{(1-u) \sqrt{u} \sqrt{1+H^2} \sqrt{(u-x_1) (x_2-u)}} \, du =-\frac{1}{2}\int_{\gamma_1} \frac{Hz+C^{-1}}{(1-z)f(z)}\, dz$$


\begin{figure}[h]\label{curves}
\centerline{\includegraphics[width=7cm,height=7cm]{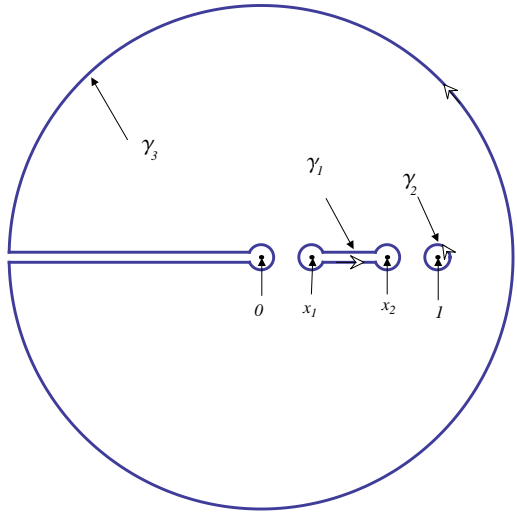}}
\caption{The closed curves $\gamma_1$, $\gamma_2$ and $\gamma_3$}
\end{figure}

Since the function $\frac{H+C^{-1}}{(1-z)f(z)}$ is holomorphic, we have that 

$$\int_{\gamma_3}\frac{Hz+C^{-1}}{(1-z)f(z)}\, dz+\int_{-\gamma_1}\frac{Hz+C^{-1}}{(1-z)f(z)}\, dz+\int_{-\gamma_2}\frac{Hz+C^{-1}}{(1-z)f(z)}\, dz=0$$

Therefore,

\begin{eqnarray}\label{K}
K(H,C)=\left\{\begin{array}{l c r}-\pi- \frac{1}{2} \, \int_{\gamma_3} \frac{Hz+C^{-1}}{(1-z)f(z)}\, dz &\hbox{if} & C>-\frac{1}{H}\\
            & & \\
              \pi- \frac{1}{2} \, \int_{\gamma_3} \frac{Hz+C^{-1}}{(1-z)f(z)}\, dz &\hbox{if} & C<-\frac{1}{H} \end{array} \right.
\end{eqnarray}

A direct computation shows that 

$$\frac{\partial\left( \frac{  H z+C^{-1}}{(1-z) f(z)} \right) }{\partial C}=-\frac{z^2}{C^2f(z)^3}$$

Therefore, we obtain that

$$\frac{\partial K(H,C)}{\partial C}=\frac{1}{2 C^2} \int_{\gamma_3}\frac{z^2}{f(z)^3}$$

Taking the limit when the bigger radius in the curve $\gamma_3$ goes to infinity and the small radius in the curve $\gamma_3$ goes to $0$ we obtain that

$$\frac{\partial K(H,C)}{\partial C}=-\frac{1}{C^2}\, \int_{-\infty}^0\frac{x^2}{\sqrt{(1+H^2)(-x)(x_1-x)(x_2-x))}}\, dx < 0$$

A direct computation shows that if $H<0$, then

 $$\lim_{C\to -\frac{1}{H}} x_2=1\com{and} \lim_{C\to -\frac{1}{H}} x_1=\frac{H^2}{1+H^2}$$
 
 We also have that, on the curve $\gamma_3$, 
 
 $$ \frac{  H z+C^{-1}}{(1-z) f(z)}\longrightarrow G(z)= \frac{H}{i(1-z) \hbox{sr}(-z) \, \hbox{sr}(\frac{(1+H^2)z-H^2}{1-z})} \com{as} C\to -\frac{1}{H}$$ 
 
 The function $G$ is holomorphic in the complement of the set 
 
 $$\{x+i y:\,y=0 \,\hbox{and,} \,  \hbox{either}\, x\le0 \quad \hbox{or}\quad \frac{H^2}{1+H^2}\le x\le 1\}$$

\begin{figure}[h]\label{g4}
\centerline{\includegraphics[width=7cm,height=7cm]{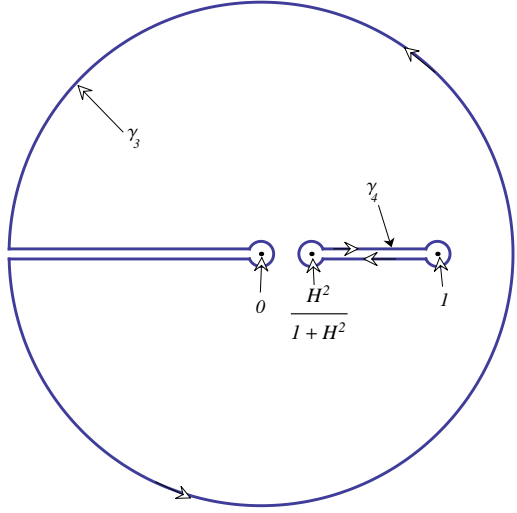}}
\caption{The closed curves  $\gamma_3$ and $\gamma_4$}
\end{figure}

If we define $\gamma_4$ like in Figure 4.2, 
we have that $\int_{\gamma_3}g(z)dz+\int_{\gamma_4}g(z)dz=0$, and taking the limit when the radius of the small circle in the curve $\gamma_4$ goes to zero we obtain that

$$-\frac{1}{2}\int_{\gamma_3} G(z)\, dz = \frac{1}{2}\int_{\gamma_4} G(z)\, dz=\int_{\frac{H^2}{1+H^2}}^{1} \frac{-H}{\sqrt{u(1-u)((1+H^2)u-H^2)}}\, du= \int_0^1\frac{-H}{\sqrt{v(1-v)(H^2+v)}} \, dv$$

The last equation follows by doing the substitution $v=(1+H^2)u-H^2$. Using the expression for $K(H,C)$ given in \ref{K} we obtain the expressions in the Proposition  for the limit of $K(H,C)$ when $C$ goes to $-\frac{1}{H}^+$ and $-\frac{1}{H}^-$. Using the fact that $b(H)=-2 i H EllipticK(-H^2)+2 H EllipticK(1+H^2)$, we obtain that $\lim_{H\to0^-}b(H)=0$ and $\lim_{H\to-\infty}b(H)=\pi$. 

The limits for $K(H,C)$ when $C$ goes to $2(H+\sqrt{1+H^2})$ and when $C$ goes to $\infty$ were computed in \cite{P}.

\end{proof}

Figure 4.3 
shows the limit values of the function $K(H,C)$ as well as all possible values of this function for every $H$. It also shows the discontinuity of this function when $C$ approaches $-\frac{1}{H}$.
\begin{figure}[h]\label{limits}
\centerline{\includegraphics[width=9cm,height=7cm]{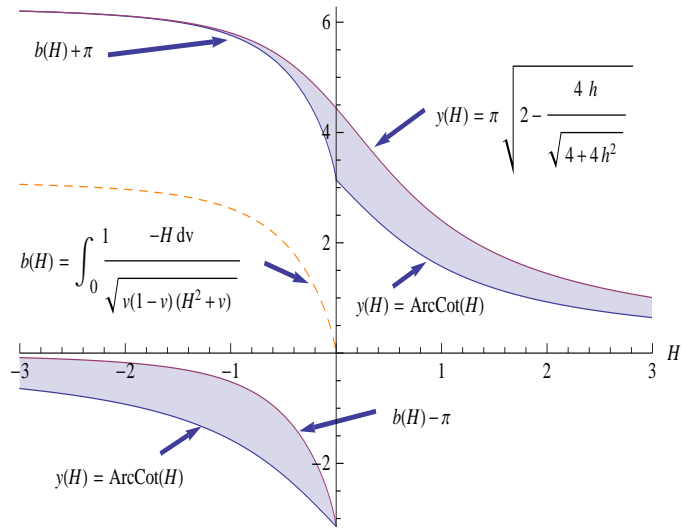}}
\caption{The shaded region shows the possible values of $K(H,C)$ for every $H$}
\end{figure}

\section{Fundamental Piece}

The profile curve of cmc rotational surface in $S^3$ can be built as the union of rotations of a single piece that we will call the fundamental piece.

\begin{mydef}

The fundamental piece of the surface $\Sigma_{H,C}$ is defined as the profile curve $\beta(t)$ given in Lemma \ref{exp for 2} restricted to the interval $[-\frac{\pi }{4 \sqrt{1+H^2}},\frac{3 \pi }{4 \sqrt{1+h^2}}]$

\end{mydef}

Here are some easy-to-check properties of the fundamental piece of the surfaces $\Sigma_{H,C}$.

\begin{prop}\label{pfp} Let $\gamma:[-\frac{\pi }{4 \sqrt{1+H^2}},\frac{3 \pi }{4 \sqrt{1+h^2}}]\to \bfR{2}$ be the fundamental piece of the surface $\Sigma_{H,C}$. The following properties hold true:

\begin{itemize}
\item The distance from points in the fundamental piece to the origin satisfies the following inequality:
$$ | \gamma(\frac{\pi }{4 \sqrt{1+H^2}})|\le |\gamma(t)|\le |\gamma(\frac{3\pi }{4 \sqrt{1+H^2}})|=|\gamma(-\frac{\pi }{4 \sqrt{1+H^2}})|$$
\item
If $C\ne-\frac{1}{H}$, then, the angle between $\gamma(-\frac{\pi }{4 \sqrt{1+H^2}})$ and $\gamma(\frac{3\pi }{4 \sqrt{1+H^2}})$ is given by $K(H,C)$. If $C=-\frac{1}{H}$, then, the angle
 between $\gamma(-\frac{\pi }{4 \sqrt{1+H^2}})$ and $\gamma(\frac{3\pi }{4 \sqrt{1+H^2}})$ is given by $b(H,C)=\int_0^1\frac{-H}{\sqrt{v(v-1)(H^2+v)}} \, dv$
\item
The profile curve of $\Sigma_{H,C}$ is the union of rotations of the fundamental piece.
\end{itemize}
\end{prop}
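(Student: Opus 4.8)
\emph{Proof proposal.} The plan is to extract all three items directly from the explicit description of the profile curve $\beta$ in Lemma \ref{exp for 2}, together with the quasi‑periodicity of the functions $g$ and $\theta$ appearing there. Write $T=\frac{\pi}{\sqrt{1+H^{2}}}$, so the fundamental interval is $[-\tfrac T4,\tfrac{3T}4]$, which has length exactly $T$; recall from (\ref{g}) and (\ref{radius}) that $g$, $g'$ and $|\gamma|$ are $T$‑periodic. For item 1, note from (\ref{radius}) that
\[
|\gamma(t)|^{2}=\frac{C+2H+2CH^{2}-\sqrt{C^{2}-4CH-4}\,\sin\!\big(2\sqrt{1+H^{2}}\,t\big)}{2C+2CH^{2}}
\]
is a strictly decreasing affine function of $\sin(2\sqrt{1+H^{2}}\,t)$, since $C>2(H+\sqrt{1+H^{2}})$ forces $C^{2}-4CH-4>0$ and $2C+2CH^{2}>0$. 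On $[-\tfrac T4,\tfrac{3T}4]$ the argument $2\sqrt{1+H^{2}}\,t$ runs over $[-\tfrac\pi2,\tfrac{3\pi}2]$, so the sine attains its maximum $1$ precisely at $t=\tfrac T4$ and its minimum $-1$ precisely at $t=-\tfrac T4$ and $t=\tfrac{3T}4$; substituting these values gives the asserted inequalities together with $|\gamma(\tfrac{3T}4)|=|\gamma(-\tfrac T4)|$.

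For item 2, first suppose $C\neq-\tfrac1H$. By the identification $\gamma=\beta=\alpha$ on the fundamental interval made in the proof of Lemma \ref{exp for 2}, the curve $\alpha$ of Lemma \ref{explicit formula} is given in polar coordinates by radius $\sqrt{(C-g(t)^{2})/C}$ and polar angle $\theta(t)$ with $\theta(t+T)=\theta(t)+K(H,C)$ (Remark \ref{rem2.2}); since moreover $g$, hence the radius, is $T$‑periodic and $\tfrac{3T}4=-\tfrac T4+T$, the point $\gamma(\tfrac{3T}4)$ is $\gamma(-\tfrac T4)$ rotated about the origin by the angle $K(H,C)$, as claimed. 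Now suppose $C=-\tfrac1H$ (so $H<0$), where the polar form of Lemma \ref{explicit formula} degenerates but the formula of Lemma \ref{exp for 2} does not. At $t=-\tfrac T4$ and $t=\tfrac{3T}4$ one has $g'=0$ (these are zeros of $\cos(2\sqrt{1+H^{2}}t)$) and $g$ takes the same value $g_{\min}$, with $Hg_{\min}^{2}+1=\tfrac1{1+H^{2}}>0$; hence at each of these two points the formula of Lemma \ref{exp for 2} collapses to a fixed negative multiple of $(\sin\theta(t),\cos\theta(t))$, so the angle between $\gamma(-\tfrac T4)$ and $\gamma(\tfrac{3T}4)$ is governed by $\theta(\tfrac{3T}4)-\theta(-\tfrac T4)=\int_{0}^{T}\frac{\sqrt C\,g(\tau)(H-g(\tau)^{-2})}{g'(\tau)^{2}+g(\tau)^{2}}\,d\tau$. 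On $[\tfrac T4,\tfrac{3T}4]$ the map $u=g(\tau)^{2}/C$ is a monotone bijection onto $[\tfrac{H^{2}}{1+H^{2}},1]$ — because at $C=-\tfrac1H$ one computes $g_{\max}^{2}=C$ and $g_{\min}^{2}=\tfrac{H^{2}}{1+H^{2}}C$ — and, using the ODE (\ref{the ode}) to express $g'^{2}$ and $g'^{2}+g^{2}$ as rational functions of $u$, this integral is carried by the substitution $u=g^{2}/C$ followed by $v=(1+H^{2})u-H^{2}$ (as in the proof of Proposition \ref{K}) onto the integral defining $b(H,C)$.

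For item 3, since $g$ and $g'$ are $T$‑periodic and $\theta$ (from Lemma \ref{exp for 2}) satisfies $\theta(t+T)=\theta(t)+\kappa$ for the constant $\kappa=\theta(T)$ — its integrand being $T$‑periodic — one may write $\beta(t)=\frac{1}{\sqrt{C(g'(t)^{2}+g(t)^{2})}}\,R_{-\theta(t)}\big(-\sqrt{C}\,g'(t),\,-(Hg(t)^{2}+1)\big)$, where $R_{\psi}$ is the rotation of $\mathbb{R}^{2}$ by $\psi$; reading off the effect of replacing $t$ by $t+T$ shows $\beta(t+T)=R\,\beta(t)$ for the single rotation $R=R_{-\kappa}$. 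Consequently, for every integer $k$, the restriction of $\beta$ to $[-\tfrac T4+kT,\tfrac{3T}4+kT]$ equals $R^{k}$ applied to the fundamental piece, and since these intervals have length $T$ and their union (consecutive ones meeting only at an endpoint) is all of $\mathbb{R}$, the profile curve of $\Sigma_{H,C}$ is exactly $\bigcup_{k\in\mathbb{Z}}R^{k}(\text{fundamental piece})$; when $C\neq-\tfrac1H$, item 2 identifies $R$ with the rotation by $K(H,C)$, consistent with Remark \ref{rem2.2}.

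The only genuinely laborious step is the second case of item 2: once $\beta$ is evaluated at the two endpoints, one must carry the change of variables through carefully and keep precise track of the normalising constant — and of any integer multiple of $\pi$ picked up from the profile curve passing through the origin at $t=\tfrac T4$, which is exactly the phenomenon responsible for the one‑sided limits $b(H)\pm\pi$ of $K(H,C)$ at $C=-\tfrac1H$ in Proposition \ref{K} — in order to land precisely on $b(H,C)$. Items 1 and 3 are then routine consequences of the formulas already established.
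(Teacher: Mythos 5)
Items 1 and 3, and the $C\neq-\frac{1}{H}$ half of item 2, are correct and are essentially the paper's own argument written out: the paper disposes of item 1 by citing (\ref{radius}) and of items 2--3 by citing Remark \ref{rem2.2}, which are exactly the ingredients you use (your rotation identity $\beta(t+T)=R\,\beta(t)$ for the formula of Lemma \ref{exp for 2} is a nice way to make item 3 work also at $C=-\frac{1}{H}$). Where you genuinely diverge is the case $C=-\frac{1}{H}$ of item 2: the paper obtains the angle there with no new computation, by combining Remark \ref{rem2.2} with the continuity in $(H,C)$ of the immersions of Lemma \ref{exp for 2} and the one-sided limits of $K(H,C)$ from Proposition \ref{K}, whereas you propose a direct evaluation of $\theta(\frac{3T}{4})-\theta(-\frac{T}{4})$ for the $\theta$ of Lemma \ref{exp for 2}. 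Your endpoint analysis ($g'=0$ there, $Hg_{\min}^2+1=\frac{1}{1+H^2}>0$, both endpoints equal the same negative multiple of $(\sin\theta,\cos\theta)$) is correct.

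The gap is exactly the step you defer (``keep precise track of any integer multiple of $\pi$''): it is not routine bookkeeping but the crux, and it does not come out as you assert. Carrying out your plan, the controlling quantity is $\Delta\theta=\int_0^T\frac{\sqrt{C}\,g\,(H-g^{-2})}{g'^2+g^2}\,d\tau$, and this equals $-(b(H)+\pi)$ rather than $\pm b(H)$ modulo $2\pi$: for instance at $H=-1$, $C=1$ one computes numerically $\Delta\theta\approx-5.764$ while $b(-1)=\int_0^1 \frac{dv}{\sqrt{v(1-v)(1+v)}}\approx 2.622$, so $\Delta\theta\approx-(b(-1)+\pi)$. Hence the rotation carrying $\gamma(-\frac{T}{4})$ to $\gamma(\frac{3T}{4})$ is by $b(H)\pm\pi$ $(\mathrm{mod}\ 2\pi)$, and the undirected angle between the two endpoint vectors is $\pi-b(H)$, not $b(H)$. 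This is also what the paper's own route predicts: the one-sided limits of $K(H,C)$ at $C=-\frac{1}{H}$ are $b(H)+\pi$ and $b(H)-\pi$, which agree modulo $2\pi$ with each other but not with $b(H)$; the extra $\pi$ is produced by the profile curve passing through the origin at $t=\frac{T}{4}$. So as written your computation cannot ``land precisely on $b(H,C)$''; to close the argument you must either exhibit a compensating $\pi$ (the numerics show there is none) or confront the interpretation of ``angle'' in the statement, a point that the paper's one-line continuity proof also passes over silently and which is where the real content of this case lies.
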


\begin{proof}

The first item follows from the inequality (\ref{radius}). The second item follows from Remark 2.2 
and the continuity in therm of $H$ and $C$ of the immersions given in Lemma \ref{exp for 2}. The last item again follows from Remark 2.2
\end{proof}
\begin{figure}[h]\label{fp}
\centerline{\includegraphics[width=6cm,height=6cm]{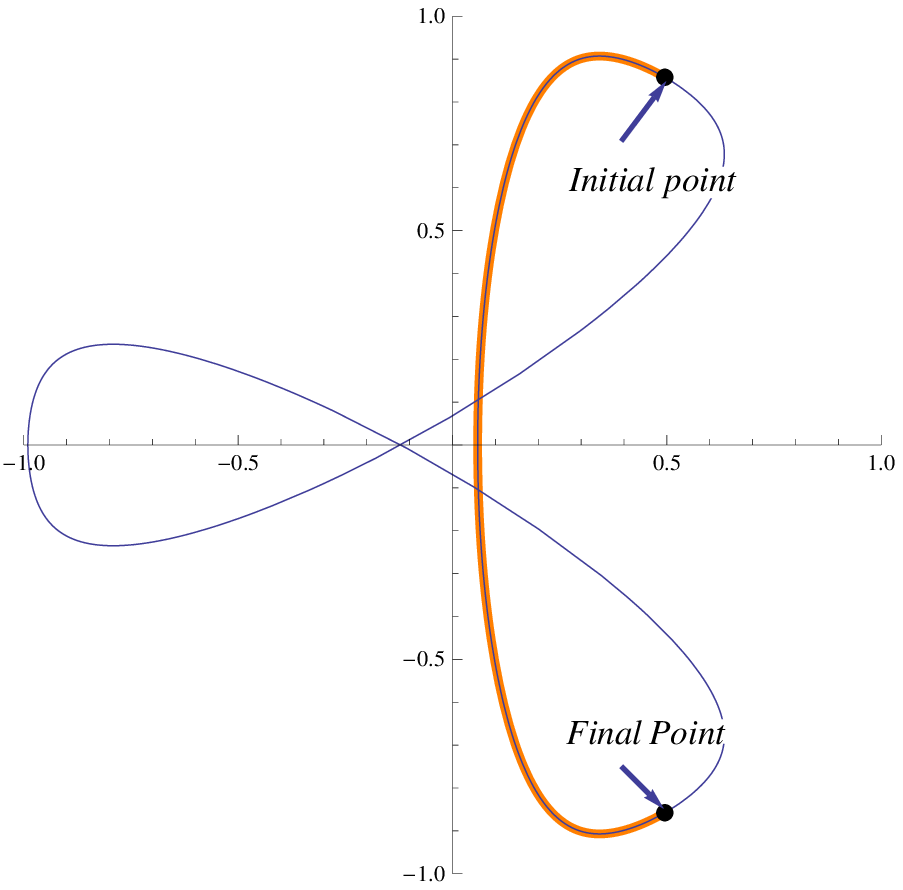}\includegraphics[width=6cm,height=6cm]{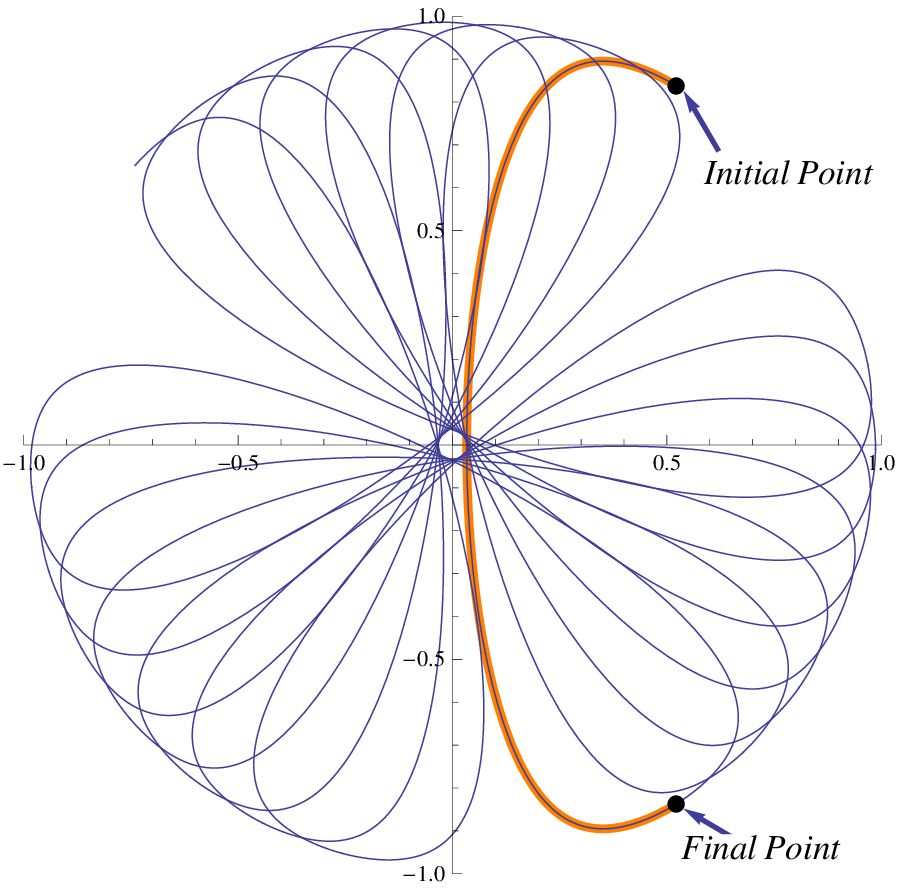}}
\caption{The first picture shows the profile curve and its fundamental piece when $H=-0.2$ and $C=7.10621080709656$. The second picture shows part of the profile curve when $H=-0.2$ and $C=6$}
\end{figure}

The next theorem shows a properly immersed vs dense duality property for the immersions $\Sigma_{H,C}$.

\begin{thm} If $C=-\frac{1}{H}$ with $H<0$, then the immersion $\Sigma_{H,C}$ is either properly immersed or it is dense in the region 

$$\{  (x,y,z,w)\in S^3:  z^2+w^2\le \frac{1}{1+H^2}\}$$

Moreover, if $C\ne -\frac{1}{H}$, then the immersion $\Sigma_{H,C}$ is either properly immerse or it is dense in the region 

$$R_{H,C}= \{ (x,y,z,w)\in S^3: m_{H,C}\le z^2+w^2\le M_{H,C}\}$$

where,

$$m_{H,C}= \frac{C+2 H+2 C H^2-\sqrt{-4+C^2-4 C H}}{2 C+2 C H^2},\quad  M_{H,C}= \frac{C+2 H+2 C H^2+\sqrt{-4+C^2-4 C H} }{2 C+2 C H^2}$$

\end{thm}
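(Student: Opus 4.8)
The plan is to use the structure described in Section 5: the profile curve of $\Sigma_{H,C}$ is the union of rotations of the fundamental piece $\gamma$ by angles that are integer multiples of $K(H,C)$ (or of $b(H)$ when $C=-\frac1H$), by Remark 2.2 and Proposition \ref{pfp}. Write $\rho=\sqrt{z^2+w^2}$ for the ``radius'' coordinate. By \eqref{radius}, along $\gamma$ the quantity $\rho^2=|\gamma(t)|^2$ oscillates between the minimum $m_{H,C}$ (attained at $t=\frac{\pi}{4\sqrt{1+H^2}}$, i.e.\ at $\theta$-phase $T/4$) and the maximum $M_{H,C}$ (attained at the endpoints of the fundamental-piece interval), so the whole profile curve is contained in the annulus $\{m_{H,C}\le\rho^2\le M_{H,C}\}$; revolving in $s$, the surface $\Sigma_{H,C}$ lies in $R_{H,C}$. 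When $C=-\frac1H$ one has $m_{H,C}=0$ and $M_{H,C}=\frac1{1+H^2}$ (this is the limiting computation $\lim x_1=\frac{H^2}{1+H^2}$, $\lim x_2=1$ from Proposition \ref{K}, together with \eqref{radius}), giving the disk region in the first assertion. So in all cases the image lies in the stated region, and the real content is the dichotomy properly-immersed vs.\ dense.

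Next I would set up the dichotomy via the angle $K:=K(H,C)$ (resp.\ $b(H)$). The profile curve is $\bigcup_{q\in\Z} R_{qK}\gamma([-\tfrac{\pi}{4\sqrt{1+H^2}},\tfrac{3\pi}{4\sqrt{1+H^2}}])$, where $R_\psi$ denotes rotation by $\psi$ in the $(z,w)$-plane. If $\frac{K}{2\pi}$ is rational, say $K=\frac{2\pi p}{m}$ in lowest terms, then only finitely many distinct rotates occur, the profile curve is a finite union of compact arcs hence compact, and $\Sigma_{H,C}$ is a properly immersed (in fact closed) surface, invariant under $Z_m$; that is the ``properly immersed'' alternative. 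If instead $\frac{K}{2\pi}$ is irrational, then $\{qK \bmod 2\pi:q\in\Z\}$ is dense in the circle by Weyl/Kronecker equidistribution. I would then argue that the closure of the profile curve is all of the annulus $\{m_{H,C}\le\rho^2\le M_{H,C}\}$ in the $(z,w)$-plane: a single rotate $R_{qK}\gamma$ is an arc whose radial coordinate $\rho$ sweeps the entire interval $[\sqrt{m_{H,C}},\sqrt{M_{H,C}}]$ continuously (monotone on each half of the fundamental piece, by the first item of Proposition \ref{pfp}), so it is a curve joining the inner boundary circle to the outer one; rotating such a ``radial spanning arc'' by the dense set of angles $qK$ fills the closed annulus. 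Revolving in $s$ then shows $\overline{\Sigma_{H,C}}\supseteq R_{H,C}$, and combined with the containment from the previous paragraph, $\Sigma_{H,C}$ is dense in $R_{H,C}$. The case $C=-\frac1H$ is identical with $b(H)$ in place of $K$ and the degenerate annulus (a punctured disk whose closure is the full disk).

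The step I expect to be the main obstacle is the density claim: that the closure of the union of rotated fundamental pieces is the \emph{entire} closed annulus, not merely a dense subset of each boundary circle or some thinner set. The key point to nail down carefully is that each fundamental-piece rotate really is a connected arc whose $\rho$-coordinate attains \emph{every} value in $[\sqrt{m_{H,C}},\sqrt{M_{H,C}}]$ — this needs the monotonicity of $t\mapsto|\gamma(t)|$ on $[-\tfrac{\pi}{4\sqrt{1+H^2}},\tfrac{\pi}{4\sqrt{1+H^2}}]$ and on $[\tfrac{\pi}{4\sqrt{1+H^2}},\tfrac{3\pi}{4\sqrt{1+H^2}}]$, which follows from the explicit formula \eqref{radius} since $\sin(2\sqrt{1+H^2}\,t)$ is monotone on each such half-period. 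Given that, a short topological argument (each rotate is a path from a point of the inner circle to a point of the outer circle; the union over a dense set of rotation angles, together with the two boundary circles which are themselves limits, is dense in the annulus) closes the proof. One should also record that the two alternatives are genuinely exclusive — if $\Sigma_{H,C}$ is dense in the two-dimensional region $R_{H,C}$ it cannot be properly immersed, since a properly immersed surface is closed in $S^3$ and a $2$-dimensional surface cannot equal a $3$-dimensional region.
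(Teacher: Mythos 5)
Your proposal is correct and follows essentially the same route as the paper: properness when $K(H,C)$ (or $b(H)$) is a rational multiple of $2\pi$ via finitely many rotates of the fundamental piece, and density otherwise via the monotone radial sweep of the fundamental piece (inequality (\ref{radius}) / Proposition \ref{pfp}), the intermediate value theorem, and density of irrational rotations of the circle, then revolving to fill $R_{H,C}$. The paper phrases the filling argument circle-by-circle rather than by rotated spanning arcs, but this is the same idea.
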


\begin{proof}
When $C\ne -\frac{1}{H}$ we have that if $\frac{K(H,C)}{2 \pi}=\frac{m}{k}$ with $k$ and $m$ relatively prime integers and $k>0$, then, it is easy to see that the profile curve is the union of $k$ copies of rotations of the fundamental piece. Therefore the immersion is proper. If $\frac{K(H,C)}{2 \pi}$ is an irrational number, for any $r\in [ \sqrt{m_{H,C}}, \sqrt{M_{H,C}}]$, using the first item of Proposition \ref{pfp} and the intermediate value theorem, we have that as $t$ goes from $0$ to $\infty$, the profile curve $\beta(t)$ of the surface $\Sigma_{H,C}$, hits the circle $C_r$ centered at the origin with radius $r$ at points that differ by a fixed angle $K(H,C)$. The union of all these points in the circle $C_r$ is dense in the $C_r$. The problem of proving this last statement reduces to that of showing that for any irrational number $\tau$, the set $\{n \tau-[n \tau]:n\in \mathbb{Z}\}$ is dense in the interval $[0,1]$, which is a known fact. We therefore have that the profile curve is dense in the annulus, $\{  (z,w)\in \bfR{2}: m_{H,C} \le z^2+w^2\le M_{H,C}\}$. Looking at the formula for rotational immersion (Formula \ref{the immersions}), we conclude that $\Sigma_{H,C}$ is dense in $R_{H,C}$. The proof of the case $C=-\frac{1}{H}$ is similar.

\end{proof}

\section{Embedded examples}

Let us start this section by considering the rotational immersions obtained when $C=-\frac{1}{H}$ with $H<0$. As pointed out before, for this case the immersions given in Lemma \ref{explicit formula} are not well defined and therefore the immersions given in Lemma \ref{exp for 2} are needed.

\begin{thm}
None of the cmc rotational immersions in $S^3$ that satisfies $C=-\frac{1}{H}$ is embedded. Moreover, the profile curves of all these examples contain the origin and therefore these immersions contain  a circle of 
radius 1. Also,  for any $m>2$, there exists an $H<0$ such that the profile curve of the rotational immersion $\Sigma_{H,C}$, given $C=-\frac{1}{H}$, is invariant under the cyclic group $Z_m$.  Figure 1.1 in the introduction shows some of these immersions.


\end{thm}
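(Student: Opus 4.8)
The plan is to prove the three claims in turn. \emph{First, the profile curve meets the origin.} When $C=-\frac1H$ one has $CH=-1$, hence $C^2-4CH-4=C^2$ and, since $C>0$, $\sqrt{C^2-4CH-4}=C$, while $2CH^2=-2H$. Substituting into the quantity $m_{H,C}$ of the preceding theorem,
$$m_{H,C}=\frac{C+2H+2CH^2-\sqrt{C^2-4CH-4}}{2C+2CH^2}=\frac{C+2H-2H-C}{2C(1+H^2)}=0.$$
By (\ref{radius}), $t\mapsto|\beta(t)|^2$ is continuous with minimum value $m_{H,C}=0$, attained exactly where $\sin(2\sqrt{1+H^2}\,t)=1$, i.e.\ for $t\in\frac T4+T\Z$ with $T=\frac{\pi}{\sqrt{1+H^2}}$. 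At such $t$ one has $\beta(t)=0$, so $\phi(s,t)=(\cos s,\sin s,0,0)=\gamma(s)$; thus the unit great circle $\gamma$ lies on $\Sigma_{H,-1/H}$.

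\emph{Non-embeddedness.} By (\ref{radius}) the function $|\beta|^2$ has period $T$, so $\beta(\tfrac T4)=\beta(\tfrac T4+T)=0$, and I would exhibit two distinct sheets of the immersion through $\gamma$: one near $t=\tfrac T4$ and one near $t=\tfrac T4+T$. At a parameter $t_\ast\in\frac T4+T\Z$ the factor $\sqrt{1-|\beta|^2}$ equals $1$ and has vanishing $t$-derivative (a minimum of $|\beta|^2$), so $\partial_t\phi(s,t_\ast)=(0,0,\beta'(t_\ast))$, $\partial_s\phi(s,t_\ast)=\gamma'(s)$, and the tangent plane of that sheet at $\gamma(s)$ is spanned by $\gamma'(s)$ and $(0,0,\beta'(t_\ast))$, with $\beta'(t_\ast)\neq0$ since $\beta$ is regular. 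By the structure of the immersion of Lemma \ref{exp for 2} (cf.\ Remark \ref{rem2.2} and the last item of Proposition \ref{pfp}), consecutive fundamental pieces differ by a fixed rotation $\rho$ of the $(z,w)$-plane, so $\beta(t+T)=\rho\,\beta(t)$ and $\beta'(\tfrac T4+T)=\rho\,\beta'(\tfrac T4)$; the angle of $\rho$ is $b(H)+\pi$ (the common value of $\lim_{C\to-1/H^\pm}K(H,C)$ modulo $2\pi$, by Proposition \ref{K}). Since $b(H)\in(0,\pi)$ for every $H<0$, $\rho\neq\pm I$, so $\beta'(\tfrac T4)$ and $\beta'(\tfrac T4+T)$ are not parallel; hence the two sheets have distinct tangent planes along $\gamma$ and meet transversally there, so the image of $\phi$ is not a $C^1$ surface near $\gamma$ and $\Sigma_{H,-1/H}$ is not embedded.

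\emph{The $Z_m$-symmetric examples.} By the last bullet of Proposition \ref{K}, $b\colon(-\infty,0)\to(0,\pi)$ is continuous and strictly decreasing with $\lim_{H\to-\infty}b(H)=\pi$ and $\lim_{H\to0^-}b(H)=0$, hence a decreasing bijection onto $(0,\pi)$. For an integer $m>2$ we have $\frac{2\pi}{m}\in(0,\pi)$, so there is a unique $H_m<0$ with $b(H_m)=\frac{2\pi}{m}$. For $C=-\frac1{H_m}$ the last item of Proposition \ref{pfp} gives that the profile curve of $\Sigma_{H_m,C}$ is the union of the rotations of its fundamental piece by the angles $q\,b(H_m)=\frac{2\pi q}{m}$, $q\in\Z$, and this set is invariant under the rotation by $\frac{2\pi}{m}$, i.e.\ under $Z_m$ (moreover $\beta$ is $mT$-periodic, so the profile curve closes up and the surface is compact). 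These are the surfaces of Figure 1.1.

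The main obstacle is the non-embeddedness step, and within it the identification of the rotation $\rho$ joining successive fundamental pieces when $C=-\frac1H$: there the explicit immersion of Lemma \ref{explicit formula} and the angle function $K(H,C)$ are unavailable, and one must argue through Lemma \ref{exp for 2} and the limiting value $b(H)$, keeping careful track of the $\pi$-jump that the polar angle of the profile curve undergoes at each crossing of the origin (the very jump responsible for the discontinuity of $K$ at $C=-\frac1H$ in Proposition \ref{K}). The remaining two steps are the elementary computation above and the monotonicity of $b$.
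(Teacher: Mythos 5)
Your first and second steps are sound and essentially follow the paper's own route: the paper likewise checks by direct computation that $\beta(\frac{T}{4})=\beta(\frac{5T}{4})=(0,0)$ (so the surface contains the great circle $(\cos s,\sin s,0,0)$) and concludes non-embeddedness from the fact that $0<b(H)<\pi$ prevents the profile curve from closing up after one period; your transversality argument through the rotation $\rho$ is a more explicit version of the same self-intersection-at-the-origin idea, and its conclusion is robust, since $\rho\ne\pm I$ for either of the two candidate values of its angle discussed below.

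The genuine gap is that your steps 2 and 3 assign two different angles to the same rotation $\rho$ (the rotation with $\beta(t+T)=\rho\,\beta(t)$): in step 2 its angle is $b(H)+\pi$ modulo $2\pi$, in step 3 it is $b(H)$, and since $b(H)\in(0,\pi)$ these cannot both hold. The continuity argument you yourself invoke settles which one is forced: for $C\ne-\frac1H$ one has $\beta_C(t+T)=\rho_C\,\beta_C(t)$ with $\rho_C$ the rotation by the angle $K(H,C)$, the immersions of Lemma \ref{exp for 2} depend continuously on $C$ at $C=-\frac1H$, and both one-sided limits of $K$ in Proposition \ref{K} are congruent to $b(H)+\pi$ modulo $2\pi$; hence $\rho$ is the rotation by $b(H)+\pi$, the extra $\pi$ being exactly the jump of the polar angle at the origin crossing inside the fundamental piece. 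With this value, step 3 fails as written: if $b(H_m)=\frac{2\pi}{m}$, consecutive pieces differ by the rotation by $\frac{2\pi}{m}+\pi$, so for $m$ odd $\beta$ is $2mT$-periodic rather than $mT$-periodic, and for $m\equiv 2\pmod 4$ the rotations generated form only $Z_{m/2}$ (for $m=6$ one gets a three-petal curve), so $Z_m$-invariance does not follow. Note that your step 3 is the route taken in the paper's proof (which, as in the second item of Proposition \ref{pfp}, works with the angle $b(H)$), so the clash is precisely the $\pi$-jump bookkeeping you flagged as the main obstacle, and it must be resolved explicitly. Under the step-2 identification the repair is immediate from the surjectivity of $b$ onto $(0,\pi)$: choose $H$ with $b(H)+\pi=\frac{2\pi k}{m}$ for some $k$ coprime to $m$ with $\frac m2<k<m$ (for instance $k=m-1$, i.e. $b(H)=\frac{(m-2)\pi}{m}$); then the pieces are related by rotations generating exactly $Z_m$, the curve closes after $m$ pieces, and the existence statement of the theorem is recovered for every $m>2$.
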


\begin{proof}

Let us start pointing out that, since $2(H+\sqrt{1+H^2})=\frac{2}{\sqrt{1+H^2}-H}$, then, we have that for every $H<0$,  $2(H+\sqrt{1+H^2})<-\frac{1}{H}$. Therefore, for any $H<0$ there is a rotational surface in $S^3$ such that $C=-\frac{1}{H}$. If $C=-\frac{1}{H}$, then, the values of the function $g$ moves from  $g(\frac{3 \pi}{4\sqrt{1+H^2}})=\sqrt{\frac{1-H}{1+H^2}}$ to  $g(\frac{\pi}{4\sqrt{1+H^2}})=\sqrt{-\frac{1}{H}}$. By the continuity with respect to the variables $H$ and $C$ of the immersion given in \ref{exp for 2} and Proposition \ref{K}, we have that the angle between the initial and final point is given by $b(H)=\int_0^1\frac{-H}{\sqrt{v(v-1)(H^2+v)}} \, dv$. We also know from Proposition \ref{K} that $b(H)$ takes every value in the open interval $(0,\pi)$. Therefore for any $m>2$ we can find a $H_m<0$ such that $b(H_m)=\frac{2\pi}{m}$. Since for this value of $H_m$ the profile curve is the union $m$ fundamental pieces, then, the immersion $\Sigma_{H_m,C}$ with $C=-\frac{1}{H_m}$ is properly immersed. These examples are invariant under the cyclic group $Z_m$. Since $0<b(H)<\pi$, then for all $H<0$ and $C=-\frac{1}{H}$ we have that the profile curve of $\Sigma_{H,C}$ does not close up when $t$ moves from $0$ to $2 \frac{\pi}{\sqrt{1+H^2}}$. A direct computation shows that anytime $H<0$ and $C=-\frac{1}{H}$ then the profile curve $\beta(t)$ satisfies that  $\beta(\frac{\pi}{4 \sqrt{1+H^2}})=\beta(\frac{5\pi}{4 \sqrt{1+H^2}})=(0,0)$. Therefore, none of the immersion with  $C=-\frac{1}{H}$ is embedded and all of them have the great circle $(\cos(s),\sin(s),0,0)$. This finishes the proof of the theorem.

\end{proof}

For the other immersions $\Sigma_{H,C}$ with $H<0$ we have,

\begin{thm}\label{emb}
None of the immersion $\Sigma_{H,C}$, with $H<0$ and $C\ne-\frac{1}{H}$ is embedded
\end{thm}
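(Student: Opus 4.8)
The plan is to split the analysis into the two sign regimes for the derivative of the angle function $\theta$, and in each case show that the profile curve of $\Sigma_{H,C}$ fails to be simple. Recall from Proposition \ref{K} that for fixed $H<0$ the function $C\mapsto K(H,C)$ is strictly decreasing and continuous on each of the two intervals $(2(H+\sqrt{1+H^2}),-\tfrac{1}{H})$ and $(-\tfrac{1}{H},\infty)$, with the four limiting values computed there. In particular on the upper branch $C>-\tfrac1H$ the supremum of $K(H,C)$ is $b(H)-\pi<0$, so $K(H,C)<0$ throughout that branch; on the lower branch $C<-\tfrac1H$ the range of $K$ is the interval with endpoints $\pi\sqrt{2-\frac{4H}{\sqrt{4+4H^2}}}>2\pi$ and $b(H)+\pi<2\pi$.

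First I would treat the case where $K(H,C)<0$, which by the above includes all $C>-\tfrac1H$ and possibly part of the lower branch. Here the key geometric point is the first item of Proposition \ref{pfp}: on the fundamental piece the distance to the origin attains its maximum exactly at the two endpoints $\gamma(-\tfrac{\pi}{4\sqrt{1+H^2}})$ and $\gamma(\tfrac{3\pi}{4\sqrt{1+H^2}})$, and these two endpoints have the same modulus but differ in angle by $K(H,C)$. The full profile curve is obtained by successively rotating the fundamental piece by $K(H,C)$ (Remark \ref{rem2.2} and the third item of Proposition \ref{pfp}). I would argue that if $\theta'$ ever changes sign then the profile curve already self-intersects near the maximal-modulus circle, because the curve leaves and re-enters a neighborhood of that outer circle: more precisely, the portion of the curve near the outer radius $r=\sqrt{M_{H,C}}$ consists of small arcs swinging back and forth, and since $\theta'$ vanishes at $t=\tfrac{3\pi}{4\sqrt{1+H^2}}$ (the minimum-radius point is where $\alpha$ is ``innermost'' and $\theta'=0$ by the formula $\theta'=\frac{\sqrt C g(H+g^{-2})}{C-g^2}$, which can vanish when $H<0$), the curve must turn around, forcing two consecutive copies of the fundamental piece to cross. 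Concretely I expect to show: when $\theta'$ changes sign, one fundamental piece is not embedded (it is a curve that goes out, comes back, and the angle swept is non-monotone so it doubles back over itself), and when $\theta'>0$ throughout but $K(H,C)<0$, consecutive rotated copies overlap because the angular ``width'' of one fundamental piece exceeds $|K(H,C)|$. This last inequality — that the total angular variation of $\theta$ over the fundamental piece strictly exceeds $|K(H,C)|$ whenever $K(H,C)<0$ — is the crux, and I would establish it by a contour/monotonicity argument in the spirit of the proof of Proposition \ref{K}, or by directly comparing the integral defining the angular width with that defining $K$.

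Next I would treat the remaining case, the part of the lower branch $C<-\tfrac1H$ where $K(H,C)>0$; there the range of $K$ lies in $(b(H)+\pi,\,\pi\sqrt{2-\frac{4H}{\sqrt{4+4H^2}}})$, an interval contained in $(\pi,\,2\sqrt2\,\pi)$ but, crucially, an interval that does \emph{not} contain $\tfrac{2\pi}{m}$ for any integer $m\ge 2$ once one checks $b(H)+\pi>\pi$ so $K>\pi>\tfrac{2\pi}{2}$; hence the only way to close up is $K(H,C)=\tfrac{2\pi k}{m}$ with $k\ge 2$, i.e.\ the fundamental piece wraps around more than once before closing. In that situation I would again invoke Proposition \ref{pfp}: the maximal-modulus endpoints of successive fundamental pieces are spaced by $K(H,C)$, and since several of these successive outer points (there are $m\ge 3$ of them, with $k\ge2$ of the angular steps wrapping past $2\pi$) the arcs joining them near the outer circle must cross. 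Equivalently, since $K(H,C)/(2\pi)\notin\{1/m\}$, but the curve near its outer radius behaves like the orbit of a rotation by $K(H,C)$, two points of that orbit are close together while the curve connecting them stays near the outer circle, and a standard Jordan-curve argument yields a transversal intersection.

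The main obstacle I anticipate is making rigorous the ``curve near the outer circle must self-cross'' step — i.e.\ converting the angular bookkeeping (endpoints spaced by $K(H,C)$, all of maximal modulus, versus the angular width of a single fundamental piece) into an actual intersection point of the immersion. For the $\theta'$-changes-sign subcase this should be reasonably direct since a single fundamental piece is already non-simple. For the $\theta'>0$ subcases the work is to pin down the angular width $W(H,C):=\theta(\tfrac{3\pi}{4\sqrt{1+H^2}})-\theta(-\tfrac{\pi}{4\sqrt{1+H^2}})$ of the fundamental piece and prove the strict inequality $W(H,C)>|K(H,C)|$ (equivalently that successive fundamental pieces overlap angularly), which is where I expect the real computation to live; I would try to get it from the same Schwarz-reflection/contour setup used for Proposition \ref{K}, writing $W$ as a contour integral and comparing signs, rather than from a brute-force estimate of the real integral.
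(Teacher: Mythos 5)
Your treatment of the case $C>-\frac{1}{H}$ (the case where $\theta'$ changes sign and $K(H,C)<0$) rests on the claim that a single fundamental piece is already non-simple because it ``doubles back over itself''. That claim is unjustified and in general false: on the fundamental piece the radius $|\alpha(t)|$ is strictly monotone on each half $[-\frac{T}{4},\frac{T}{4}]$ and $[\frac{T}{4},\frac{3T}{4}]$ (with $T=\frac{\pi}{\sqrt{1+H^2}}$), and since $\theta'$ depends on $t$ only through $g$, the piece is symmetric about the ray through its innermost point; a self-intersection would force two radius-symmetric parameters to have equal angle, whereas after the innermost point $\theta$ dips and only returns to $\theta(\frac{T}{4})+\frac{K}{2}<\theta(\frac{T}{4})$, so the piece is a simple arc (this is exactly what the paper's Figure 6.2 depicts, and the paper in fact assumes simplicity of the piece, handling the contrary case trivially). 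You also misplace the zeros of $\theta'$: it does not vanish at $t=\frac{3\pi}{4\sqrt{1+H^2}}$ (that is the \emph{maximum}-radius endpoint, where the paper needs $\theta'>0$); it vanishes at the interior times where $g^2=-\frac{1}{H}$. Your other subcase, ``$\theta'>0$ throughout but $K<0$'', is vacuous, since $K$ is the integral of $\theta'$ over a period. Finally, your proposed crux $W(H,C)>|K(H,C)|$ cannot close the argument even if proved: two consecutive pieces overlapping in angular extent need not intersect, because at a common angle their radii can differ.

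The lower branch $C<-\frac{1}{H}$ also has problems: there $\theta'>0$, so $K>0$ on the whole branch (not ``possibly'' $K<0$), and the supremum of $K$ is $\pi\sqrt{2-\frac{4H}{\sqrt{4+4H^2}}}$, which is \emph{less} than $2\pi$, not greater; moreover your argument addresses only the rational (closing-up) case, and the irrational case is dismissed with ``the curve connecting them stays near the outer circle'', which is false since every fundamental piece descends to the inner circle. The missing idea, which is the paper's actual mechanism in both cases, is an auxiliary Jordan curve built from one \emph{simple} fundamental piece plus a circular arc on the outer circle of central angle less than $\pi$ ($2\pi-K$ when $\sqrt{2}\pi<K<2\pi$, and $|K|$ when $-\pi<K<0$): one then checks that the next fundamental piece, which stays in the closed outer disk and meets the outer circle only at its endpoints, has points both inside and outside this closed curve (using the bounds on $K$ from Proposition \ref{K}), so by the Jordan curve theorem it must cross it, and the crossing forces a self-intersection of the profile curve. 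Without this construction (or a substitute, e.g.\ the tangent-turning argument in the paper's final remark), your proposal does not yield an actual intersection point in either regime.
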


For these immersions we can use the formula given in Lemma \ref{explicit formula} because $C\ne-\frac{1}{H}$. We will use the curve $\alpha$ and $\theta$ defined in Lemma \ref{explicit formula}. Recall that the function $g$ has period $T=\frac{\pi}{\sqrt{1+H^2}}$ and  reaches the maximum value $\sqrt{\frac{C-2H+\sqrt{C^2-4CH-4}}{2+2H^2}}$ when $t=\frac{T}{4}$ and the minimum value $\sqrt{\frac{C-2H-\sqrt{C^2-4CH-4}}{2+2H^2}}$ when $t=\frac{3 T}{4}$. A direct computation shows that if $2 (H+\sqrt{1+H^2})<C<-\frac{1}{H}$,
then $H+g(\tau)^{-2}$ is always positive because

$$H+g(\tau)^{-2}\ge H+\left(  \sqrt{\frac{C-2H+\sqrt{C^2-4CH-4}}{2+2H^2}}\right)^{-2}=\frac{1}{2} \left(C-\sqrt{-4+C^2-4 C H}\right)$$

and we can easily check that $C-\sqrt{-4+C^2-4 C H}>0 $ if $2 (H+\sqrt{1+H^2})<C<-\frac{1}{H}$. Since $\theta^\prime(t)=\frac{\sqrt{C}\, g(\tau)(H+g(\tau)^{-2})}{C-g(\tau)^2}$, we conclude that the function $\theta$ is always increasing when $C<-\frac{1}{H}$. By Proposition \ref{K} we have that the angle swept by the fundamental piece, $K(H,C)$, is between
$\sqrt{2}\pi$ and  $2 \pi$ when $C<-\frac{1}{H}$.

\begin{figure}[h]\label{emb12}
\centerline{\includegraphics[width=6cm,height=6cm]{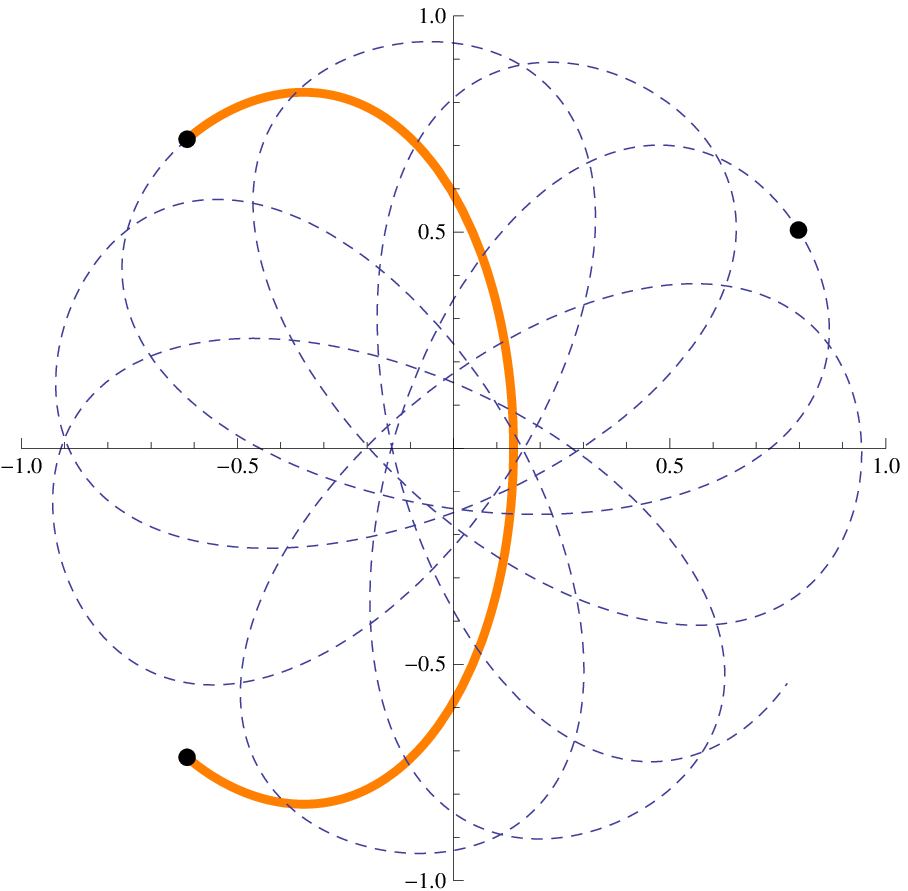}\includegraphics[width=6cm,height=6cm]{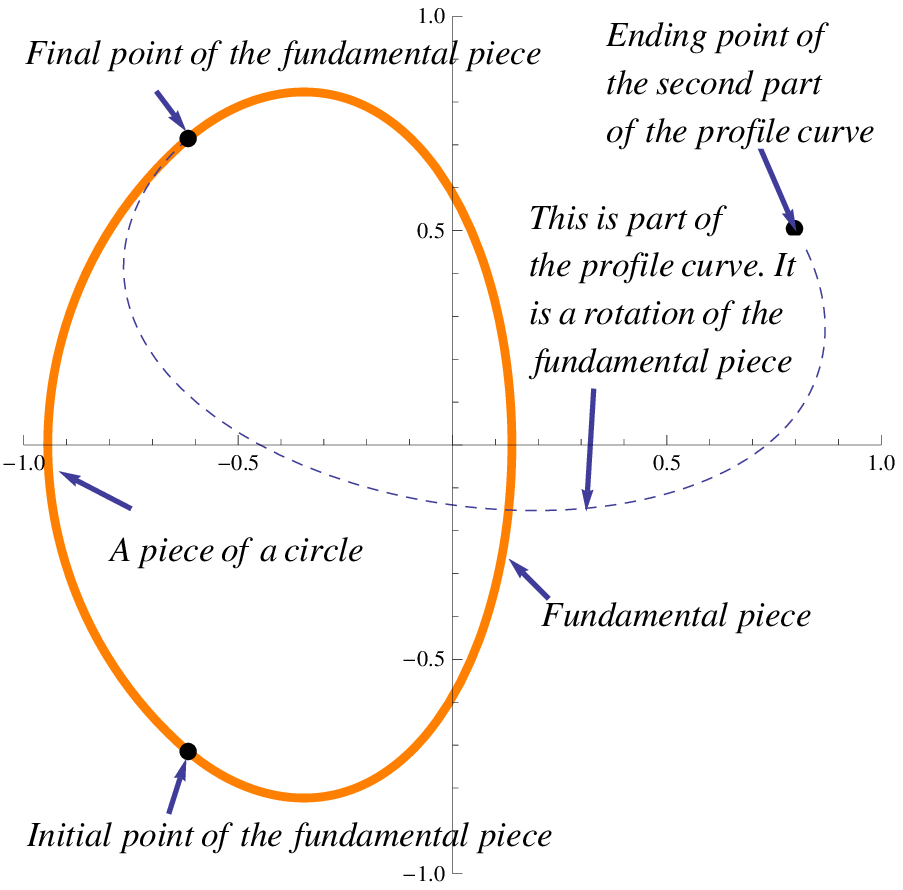}}
\caption{The first picture shows part of the profile curve when $H<0$ and $C<-\frac{1}{H}$. The second picture shows the simple closed curve used in the proof of Theorem \ref{emb} }
\end{figure}

We will prove that the immersion $\Sigma_{H,C}$ is not embedded when $C<-\frac{1}{H}$ by showing that the profile curve self intersects. Since the function $\theta$ is increasing we have that the fundamental piece do not self intercept. On the other hand, since $\sqrt{2}\pi<K(H,C)<2 \pi$, then we have that we can form a simple closed curve making the union of the fundamental piece and a circular arc with a central angle less than $\pi$, $2\pi-K(H,C)$ to be precise. See Figure 6.1. Again, using the fact that $\sqrt{2}\pi<K(H,C)<2 \pi$, we get that the portion of the profile curve defined for values of $t$ between
$\frac{3 \pi}{4 \sqrt{1+H^2}}$ and $\frac{7 \pi}{4 \sqrt{1+H^2}}$ will contains points inside and outside the simple closed curve.  Notice that for all 
$t\in(\frac{3 \pi}{4 \sqrt{1+H^2}},\frac{7 \pi}{4 \sqrt{1+H^2}})$, we have that $|\alpha(t)|<|\alpha(\frac{3 \pi}{4 \sqrt{1+H^2}})|$ and $\alpha(\frac{7 \pi}{4 \sqrt{1+H^2}})$ 
is outside the closed curve because the angle swept by the portion of profile curve consisting of two fundamental pieces is greater than $2 \pi$ and smaller than $4 \pi-(2\pi-K(H,C)$. By the Jordan closed curve theorem we conclude that the profile curve must self intercept. For the rest of this proof we will assume that $C>-\frac{1}{H}$. Since
$\theta^\prime(-\frac{\pi}{4 \sqrt{1+H^2}})$ and $\theta^\prime(\frac{3 \pi}{4 \sqrt{1+H^2}})$ are both positive and $0<K(H,C)<\pi$, we can obtain a closed $C^1$ curve by attaching a circular arc with central angle smaller than $\pi$ -negative $K(H,C)$ to be precise- to the a fundamental piece of the profile curve (see Figure 6.2). We will call this closed curve, the curve $\eta$.

\begin{figure}[h]\label{emb3}
\centerline{\includegraphics[width=6cm,height=6cm]{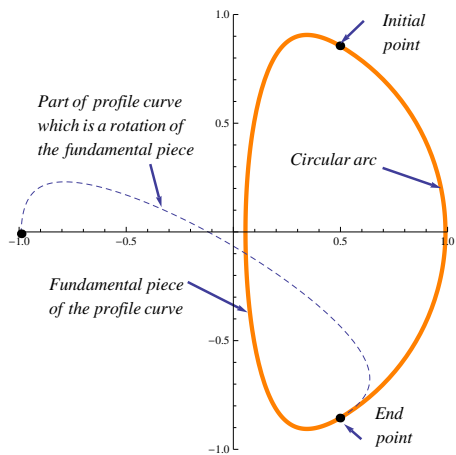}}
\caption{This picture shows a fundamental piece of the profile curve of a cmc rotational surface when $H<0$ and $C>-\frac{1}{H}$. It also shows the closed curve used in the proof of Theorem \ref{emb} }
\end{figure}

Once again, we want to show that the rotationally surface $\Sigma_{H,C}$ is not embedded by showing that the profile curve self intersects. Without loss of generality, we can assume that $\eta$ is a closed simple curve. Since $-\pi<K(H,C)<0$ we obtain that $-2\pi<2K(H,C)<K(H,C)$, therefore, using similar arguments as before we conclude that the portion of the profile curve defined for values of $t$ between
$\frac{3 \pi}{4 \sqrt{1+H^2}}$ and $\frac{7 \pi}{4 \sqrt{1+H^2}}$ will contain points inside and outside the simple closed curve $\eta$. By the Jordan closed curve theorem we conclude that the profile curve must self intercept. This finish the proof of the Theorem.

\begin{rem}
Roger Vogeler, a colleague of the author, pointed out that  another proof of the fact that the surfaces $\Sigma_{H,C}$ when $H<0$ are not embedded can be shown by proving that  the angle swept by the unit tangent vector of the fundamental piece is between $\pi$ and $2\pi$ because it is known that the total variation of the unit tangent vector of a simple closed curve is $2\pi$.
\end{rem}

\end{document}